\newtheorem{theorem}{Theorem}[section]
\newtheorem{lemma}[theorem]{Lemma}
\newtheorem{proposition}[theorem]{Proposition}
\newtheorem{conjecture}[theorem]{Conjecture}
\newtheorem{remark}[theorem]{Remark}
\newtheorem{claim}[theorem]{Claim}
\DeclareMathOperator*{\E}{\mathbb{E}}
\DeclareMathOperator*{\Var}{\mathrm{Var}}
\DeclareMathOperator*{\dto}{\overset{d}{\longrightarrow}}
\newcommand{\GG}{\mathbb{G}}
\newcommand{\R}{\mathbb{R}}
\newcommand{\Spec}{\mathrm{Spec}}
\newcommand{\FamilyX}{\mathfrak{X}}
\newcommand{\C}{\mathcal{C}}
\newcommand{\aut}{\mathrm{aut}}
\newcommand{\Hy}{\mathcal{H}}
\newcommand{\Xn}{X_{n}(H,W)}
\newcommand{\x}{\mathbf{x}}
\newcommand{\sigr}{\sigma_{r,W}}
\newcommand{\rhor}{\hat {\sigma}_{r,W}}
\newcommand{\prob}[1]{\mathbb{P}\left\{ #1 \right\}}
\newcommand{\vwa}[1]{V_{W}^{(#1)}}
\newcommand{\vw}{\vwa{r}}
\newcommand{\ktwok}{K_{r}\ominus_{2}K_{r}}
\newcommand{\dw}{d_{\mathrm{Wass}}}
\newcommand{\mgf}{moment-generating function}
\newcommand{\kk}{\mathbf{k}}
\newcommand{\By}[2]{\overset{\mbox{\tiny{#1}}}{#2}}
\newcommand{\ByRef}[2]{   \By{\eqref{#1}}{#2} }
\newcommand{\eqByRef}[1]{ \ByRef{#1}{=} }
\newcommand{\leByRef}[1]{ \ByRef{#1}{\le} }
\newcommand{\justify}[1]{\fbox{\tiny{#1}}\quad}
\newcommand{\tf}[2]{$#1$} 
\title{A limit theorem for small cliques in inhomogeneous random graphs}
\author{Jan Hladk\'y \thanks{Institute of Mathematics of the Czech Academy of Sciences, \v{Z}itn\'a 25,
		115 67,   Praha 1, Czechia. 
		Research supported by the Alexander von Humboldt Foundation, GA\v{C}R project GJ18-01472Y and RVO: 67985840. E-mail: hladky@math.cas.cz} 
\and
Christos Pelekis\thanks{Institute of Mathematics of the Czech Academy of Sciences, \v{Z}itn\'a 25,
115 67,   Praha 1, Czechia. 
Research supported by GA\v{C}R project GJ18-01472Y and RVO: 67985840. E-mail: pelekis.chr@gmail.com} 
\and 
Matas \v{S}ileikis\thanks{Institute of Computer Science of the Czech Academy of Sciences, Pod Vod\'{a}renskou v\v{e}\v{z}\'{\i} 2, 182 07 Prague, Czechia. Research partially supported by the Czech Science Foundation, grants GJ16-07822Y, GA19-08740S and GJ20-27757Y. With institutional support RVO:67985807. E-mail: matas.sileikis@gmail.com}		
}
\begin{document}
\maketitle
	
\begin{abstract}
	The theory of graphons comes with a natural sampling procedure, which results in an inhomogeneous variant of the Erd\H{o}s--R\'enyi random graph, called $W$-random graphs. We prove, via the method of moments, a limit theorem for the number of $r$-cliques in such random graphs. We show that, whereas in the case of dense Erd\H{o}s--R\'enyi random graphs the fluctuations are normal of order $n^{r-1}$, the fluctuations in the setting of $W$-random graphs may be of order $0, n^{r-1}$, or $n^{r-0.5}$. Furthermore, when the fluctuations are of  order $n^{r-0.5}$ they are normal, while when the fluctuations are of order $n^{r-1}$  they exhibit either normal or a particular type of chi-square behavior whose parameters relate to spectral properties of $W$.
	
	These results can also be deduced from a general setting [Janson and Nowicki, PTRF~1991], based on the projection method. In addition to providing alternative proofs, our approach makes direct links to the theory of graphons.
\end{abstract}

\noindent {\emph{Keywords}: graphons; inhomogeneous random graphs; limit theorems; subgraph counts; quasirandomness}

\maketitle

\section{Introduction}\label{s1}

\subsection{Subgraph counts in random graphs}
The purpose of this work is to investigate the distribution of the number of fixed-size cliques in an inhomogeneous variant of the Erd\H{o}s--R\'enyi random graph $\GG(n,p)$. 
The study of the Erd\H{o}s--R\'enyi random graph (see \cite{Janson_Luczak_Rucinski}) is over a half-century old. A central part in the development of the random graph theory concerns methods for understanding the distribution of subgraph counts. Even though ``subgraphs'' may be large-scale structures, like Hamilton cycles, here we are concerned with counting fixed-sized subgraphs. In particular, we want to describe the (bulk of the) distribution of the random variable that counts the number of copies of a fixed subgraph $H$ as $n$ tends to infinity or, in probabilistic language, to obtain a limit theorem for the distribution of subgraph counts of~$H$. 

This problem has many variants (all copies of $H$, induced copies of $H$, joint distribution for several subgraph counts, \ldots), and a variety of tools have been applied to tackle it, including Stein's method in~\cite{Barbour},
ideas from U-statistics in~\cite{Nowicki_Wierman}, and the method of moments in~\cite{Rucinski}. We refer the reader to~\cite{Janson_Luczak_Rucinski} for an entire chapter devoted to the topic and for further references.

Given graphs $H$ and $G$, let $N(H,G)$ denote the number of copies of $H$ in $G$, i.e., the number of subgraphs of $G$ that are isomorphic to $H$, and consider a random variable
\[
X_n :=N(H,\GG(n)) \, ,
\]
where $\GG(n)$ is a model of random graphs on $n$ vertices of interest.
The asymptotic normality of $X_n$ in the Erd\H{o}s--R\'enyi random graph model $\GG(n)=\GG(n,p(n))$, has been fully described by Ruci\'nski \cite{Rucinski} already in 1988. When $p(n)$ is constant it first appeared as a special case of a result by Nowicki and Wierman~\cite{Nowicki_Wierman} who showed that for any fixed graph $H$ with at least one edge and any $p \in (0,1)$,
  \begin{equation*}
    \frac{X_n - \E [X_n]}{\sqrt{\Var [X_n]}}  \; \dto \; Z \sim \mathcal N(0,1).
  \end{equation*}
  (Here and below $Z_n \dto Z$ denotes convergence in distribution.)
  Our result extends this result in the case of cliques $H = K_r$, $r \ge 2$ and fixed $p$ by generalizing $\GG(n,p)$ to the random graph model $\GG(n,W)$ which plays a key role in the theory of dense graph limits introduced in~\cite{BorgsGraphLimits,LovSzGraphLimits}. This model, which we formally define in Section~\ref{ss:inhomo}, can be described as follows: each vertex is assigned a random \emph{type} and then each pair of vertices is independently included as an edge in $\GG(n,W)$ with a probability that depends only on the types of the two vertices. The random graph $\GG(n,p)$ corresponds to a single type and hence the same probability $p$ for every pair of vertices. 

Henceforth, we shall write $\Xn :=N(H,\GG(n,W))$. In Theorem~\ref{thm:main}, we state our main result, a limit theorem for $\Xn$ for each $H = K_r, r \ge 2$. However, since our main result Theorem~\ref{thm:main} requires quite a few additional definitions, for a preview we state its implicit version, Theorem~\ref{thm:implicit}. We start the next subsection by the minimum number of definitions.

\subsection{Inhomogeneous random graphs and the simplified result statement}\label{ss:inhomo}

A \emph{graphon} is a symmetric Lebesgue measurable function $W:[0,1]^{2}\to[0,1]$. Graphons arise as limits of sequences of large finite undirected graphs with respect to the so-called cut metric (see \cite[Part 3]{Lovasz}). Intuitively, graphons may be thought of as graphs on the vertex set $[0,1]$ with infinitesimally small vertices and with a $W(x,y)$-proportion of all possible edges being present in the bipartite graph whose color classes are formed by a small neighbourhood of $x$ and of $y$, respectively. 

Graphons come with  a natural sampling procedure, which results in an inhomogeneous variant of the Erd\H{o}s--R\'enyi random graph. More precisely, given a graphon $W$, the random graph $\GG(n,W)$ is a finite simple graph on $n$ vertices, labelled by the set $[n]:=\{1,\ldots,n\}$, which is generated in two steps: 
in the first step we draw $n$ numbers \emph{types}) $U_1,\ldots, U_n$ independently from the interval $[0,1]$ according to the uniform distribution and we identify their index set with the labels of the vertex set of $\GG(n,W)$;
in the second step, each pair of vertices $i$ and $j$ in $\GG(n,W)$ is connected independently with probability $W(U_{i,}U_{j})$. 
Notice that if $W(x,y)$ is constant, say, $p\in[0,1]$, then $\GG(n,W)$ is the same as the Erd\H{o}s--R\'enyi random graph $\GG(n,p)$. Inhomogeneous random graphs $\GG(n,W)$ provide substantial additional challenges compared to $\GG(n,p)$. For example, while a standard second moment argument shows that the clique number of $\GG(n,p)$ satisfies $\omega(\GG(n,p))\sim\frac{2\log n}{\log (1/p)}$, extending this formula to $\GG(n,W)$ required new techniques,~\cite{DoHlMa}. Further work on inhomogeneous random graphs so far (\cite{BoJaRi,FrMi}) was done in a more general, possibly sparse, model which we mention in Section~\ref{sec:concluding}.

Corollary~10.4 in~\cite{Lovasz} implies that $\Xn$ obeys the law of large numbers, that is, for every $\epsilon>0$,
	\begin{equation}
		\label{eq:LLN}
		\lim_{n \to \infty}\prob{	\Xn = (1 \pm \epsilon)\E [X_n(H,W)] } = 1.
	\end{equation}
This is one of the key results in the theory of limits of dense graph sequences because it shows that each graphon can be approximated by finite graphs with similar subgraph densities. In this article we aim to understand the nature of fluctuations of $\Xn$ around its expectation.

  Fix $r\ge 2$ and write $X_n = X_n(K_r,W)$ for the number of $r$-cliques in $\GG(n,W)$. Since every $r$-set of vertices in $\GG(n,W)$ induces a random graph distributed as $\GG(r,W)$, every $r$-set of vertices induces a clique with probability  
\[
  t(K_r, W) := \prob{\GG(r,W) = K_r}.
\]
If we prescribe type $x$ to one of the vertices in an $r$-set, then this $r$-set induces a clique with probability
\[
  t(x) = t_x(K_r,W) := \prob{\GG(r,W) = K_r \;\big|\; U_1 = x} \;.
  \]
  Clearly, $t(K_r,W) = \int_0^1 t(x) dx$. 

  For a measurable function $f : [0,1]^k \to \R$ we say $f$ is \emph{constant} and denote $f \equiv c$ whenever $f(x) = c$ for almost every $x \in [0,1]^k$.
  
We start with a simplified version of our main result. The additional information provided by the complete version, that is, Theorem~\ref{thm:main}, is the description of constants $\sigma$ and $c_0, c_1, \dots$ in terms of the graphon $W$.
\begin{theorem}\label{thm:implicit}
  For $t(K_r, W), t(x)$, and $X_n$ defined above, the following holds.
\begin{enumerate}[label=(\alph*)]
  \item If $t(K_r,W) = 0$ or $t(K_r,W) = 1$, then almost surely $X_n = 0$ or $X_n = \binom {n}{r}$, respectively. 
  \item  If $t(x)$ is not constant, then there is a constant $\sigma > 0$ such that 
	\begin{equation}
	  \frac{X_n- \E [X_n]}{ n^{r - 1/2}}\;\dto\; \sigma Z \;, 
	\end{equation}
	where $Z$ is a standard normal random variable. 
      \item\label{en:imp_chisquare} If $t(x)$ is constant (but other than $0$ or $1$), then there are real numbers $c_0, c_1, \dots$, such that $\sum_i c_i^2 \in (0, \infty)$ and
	\begin{equation}\label{eq:nonnormal}
	  \frac{X_n - \E [X_n]}{n^{r - 1}} \;\dto\; c_0 Z_0 + \sum_{i \ge 1} c_i (Z_i^{2}-1) \;,
\end{equation}
where $Z_0, Z_1, \dots$ are independent standard normal random variables.
The series on the right-hand side of~\eqref{eq:nonnormal} converges a.s.\ and in $L^{2}$.
 \end{enumerate}
\end{theorem}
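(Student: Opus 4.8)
The natural route is the method of moments applied to a two-level decomposition that separates the randomness of the types from that of the edges. Let $\mathbf{U}=(U_1,\dots,U_n)$ denote the types and set $Y_n:=\E[X_n\mid\mathbf{U}]$, so that
\[
Y_n=\sum_{S\in\binom{[n]}{r}}\ \prod_{\{i,j\}\subseteq S}W(U_i,U_j)
\]
is a $U$-statistic of degree $r$ with bounded kernel $h(x_1,\dots,x_r):=\prod_{i<j}W(x_i,x_j)$, while the ``edge part'' $X_n-Y_n$ is, conditionally on $\mathbf{U}$, a centered sum of products of independent Bernoulli deviations $\widetilde A_{ij}:=A_{ij}-W(U_i,U_j)$. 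Expanding each clique indicator as $\prod_{\{i,j\}\subseteq S}\bigl(W(U_i,U_j)+\widetilde A_{ij}\bigr)$ over subsets of the edge set of $S$, and Hoeffding-decomposing $Y_n$, one writes $X_n-\E[X_n]$ as a finite sum of homogeneous terms indexed by the number of participating vertices and edges. The plan is to (i) identify the dominant term(s) in each regime, (ii) compute the relevant joint moments, and (iii) match them with the moments of the claimed limit, invoking the standard moment convergence theorem (and checking that the lower-order terms contribute nothing in the limit, which is where most of the work lies).

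\emph{Regimes.} If $t(x)\equiv 0$ or $1$, part (a) is immediate. If $t(x)$ is non-constant, the degree-one Hoeffding term of $Y_n$, namely $\binom{n-1}{r-1}\sum_i\bigl(t(U_i)-t(K_r,W)\bigr)$, has standard deviation of order $n^{r-1}\sqrt n=n^{r-1/2}$ since $\Var t(U_1)>0$, and it dominates every other term (all of which are $O(n^{r-1})$); a central limit theorem for $\sum_i t(U_i)$, or the matching moment computation, yields part (b) with $\sigma=\sqrt{\Var t(U_1)}\,/\,(r-1)!$. If $t(x)\equiv t(K_r,W)=:\theta\in(0,1)$, the degree-one term vanishes and exactly two families survive at the common scale $n^{r-1}$: the degenerate degree-two $U$-statistic term $\binom{n-2}{r-2}\sum_{i<j}h_2(U_i,U_j)$, and the single-edge term $\sum_{\{a,b\}}\widetilde A_{ab}\,g_{ab}(\mathbf{U})$, where $g_{ab}(\mathbf{U}):=\sum_{S\supseteq\{a,b\}}\ \prod_{\{i,j\}\subseteq S,\ \{i,j\}\neq\{a,b\}}W(U_i,U_j)$.

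\emph{Part (c).} I would (1) apply the spectral theorem to the symmetric Hilbert--Schmidt kernel $h_2$ on $L^2([0,1])$, writing $h_2(x,y)=\sum_i\lambda_i\varphi_i(x)\varphi_i(y)$ with $\sum_i\lambda_i^2<\infty$, so that the degenerate degree-two $U$-statistic converges, up to an explicit combinatorial factor, to $\sum_i\frac{\lambda_i}{2}(Z_i^2-1)$; (2) prove, via a conditional central limit theorem (given $\mathbf{U}$ the edge part is a sum of bounded independent mean-zero variables whose conditional variance converges a.s.\ to a deterministic limit, by the law of large numbers applied to $g_{ab}(\mathbf{U})^2$), that the scaled edge term converges to a centered normal $c_0Z_0$ independent of $\mathbf{U}$ and hence of the $U$-statistic limit; (3) combine these through joint mixed moments --- the conditional centering annihilates all cross terms at leading order --- to obtain $c_0Z_0+\sum_{i\ge 1}c_i(Z_i^2-1)$ with $c_i$ proportional to $\lambda_i$; and (4) note that $\sum_i c_i^2<\infty$ is precisely the Hilbert--Schmidt bound, that $\sum_i c_i^2>0$ (non-triviality of the limit) fails only if both $h_2\equiv 0$ and $c_0=0$, which can be excluded when $\theta\notin\{0,1\}$, and that the a.s.\ and $L^2$ convergence of the series is the standard statement for $L^2$-convergent second-order Gaussian chaos.

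\emph{Main obstacle.} The crux is the bookkeeping in step (iii): showing that in the method-of-moments expansion only the leading ``diagrams'' contribute in the limit, controlling the many lower-order terms (higher-degree Hoeffding pieces, multi-edge edge terms, and all their cross-moments with the leading terms), and handling the truncation that passes from finitely many eigenvalues of $h_2$ to the full series. A secondary but genuine difficulty is establishing strict positivity/non-triviality of the limiting constants from the stated hypotheses, which amounts to understanding exactly when the projections $t(x)-\theta$ and $h_2$ can vanish.
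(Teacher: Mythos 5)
Your outline is essentially correct, but it follows a genuinely different route from the paper: it is the projection (Hoeffding/U-statistic) method of Janson and Nowicki, which the paper explicitly acknowledges as an alternative derivation, whereas the paper proves part (b) by Stein's method applied to the dependency graph of the clique indicators, and part (c) by computing the raw moments of $X_{n,r}-\binom{n}{r}t_r$ directly, classifying the contributing $m$-tuples of $r$-sets as disjoint unions of loose cycles and matching the resulting exponential generating function with the moment generating function of the limit. Your decomposition has the virtue of making the two sources of fluctuation transparent --- the first- and second-order Hoeffding projections of the type variables versus the single-edge Bernoulli deviations --- and your constants all come out right: $\Var t(U_1)=t(K_r\ominus_1K_r,W)-t_r^2$ matches $\rhor$, the eigenvalues of $h_2=\vw-t_r$ are exactly $\Spec^-(\vw)$, and the limiting conditional variance of the edge term is $\tfrac{1}{2((r-2)!)^2}\bigl(t(\ktwok,W)-t(K_r\oplus_2K_r,W)\bigr)=\sigr^2$. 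The paper's moment computation buys a proof that never needs the orthogonal decomposition or a conditional CLT, stays entirely in the language of subgraph densities, and produces the cycle-density identities $t(C_\ell,\vw)=t(G_{\ell,r},W)$ as the bridge to the spectrum. One point in your sketch that is more than bookkeeping: the claim that $\sum_i c_i^2>0$ ``can be excluded when $\theta\notin\{0,1\}$'' requires an actual argument --- in the paper (Remark~\ref{rem:nondegen} together with Proposition~\ref{prop:purenormal}) one shows that if the chaos part vanishes then $\vw\equiv t_r$ via the equality case of Jensen's inequality, whence $t_{x,y}(\ktwok,W)>0$ almost everywhere while $W<1$ on a set of positive measure, forcing $\sigr>0$; you should supply this step rather than assert it.
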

 \begin{proof}
   Theorem~\ref{thm:main} implies all the claims of Theorem~\ref{thm:implicit} except for the fact that $\sum_i c_i^2 > 0$, which follows from Remark~\ref{rem:nondegen}.
 \end{proof}

We note that non-normal limit theorem occurs even in $\GG(n,p)$ with $p$ fixed, but for \emph{induced} subgraph counts, see \cite[Theorem 6.52]{Janson_Luczak_Rucinski}.

\paragraph{A toy instance of Theorem~\ref{thm:implicit}\ref{en:imp_chisquare}.}
The proof of Theorem~\ref{thm:implicit}\ref{en:imp_chisquare} proceeds by the method of moments and thus in itself does not provide much intuition for the asserted non-normal limit fluctuations. This behavior is however suggested by the following simple example. Let $r\ge 2$ be arbitrary. Consider $W$ which is a `disjoint union of two equally-sized cliques', that is $W(x,y) = 1$ if $(x,y)\in [0,\frac{1}{2})^2\cup [\frac{1}{2},1]^2$ and $W(x,y) = 0$ otherwise. It is easy to verify that $t(x) = 2^{ - r + 1}$ for each $x\in [0,1]$, in particular, $t(x)$ is constant. Now, the graph $\GG(n,W)$ is determined already after the first step of the construction, that is, after sampling $U_1,\ldots, U_n$. Indeed, $\GG(n,W)$ will consist of two disconnected cliques, one consisting of the vertices $V_1: = \{i: U_i\in [0,\frac12)\}$ and the other one consisting of the vertices $V_2: = \{i: U_i\in [\frac12,1]\}$. By the central limit theorem, we have $|V_1|\approx\frac{n}{2} + \mathsf{N}\cdot \sqrt{n}$, where $\mathsf{N}$ is a normal random variable with mean~0 and variance~$\tfrac{1}{4}$. Deterministically, $|V_2| = n - |V_1|\approx\frac{n}{2} - \mathsf{N}\cdot \sqrt{n}$. Hence, the number of $K_r$'s in  $\GG(n,W)$ is equal to
\[
\binom{|V_1|}{r} + \binom{|V_2|}{r}\approx \binom{\frac{n}{2} + \mathsf{N}\cdot \sqrt{n}}{r} + \binom{\frac{n}{2} - \mathsf{N}\cdot \sqrt{n}}{r}\approx \frac{1}{r!}\cdot \left(\left(\frac{n}{2} + \mathsf{N}\cdot \sqrt{n}\right)^r + \left(\frac{n}{2} - \mathsf{N}\cdot \sqrt{n}\right)^r\right)
\;.
\]
The first terms in the binomial expansions of these two summands add up to $\frac{2n^r}{2^r}$ which is non-random (and hence does not contribute to fluctuations). The second terms in the binomial expansions cancel out. Hence the main part of the fluctuations comes from the sum of the third terms, 
\[\binom{r}{2}(\tfrac{n}{2})^{r - 2}(\mathsf{N}\cdot \sqrt{n})^2 + \binom{r}{2}(\tfrac{n}{2})^{r - 2}( - \mathsf{N}\cdot \sqrt{n})^2 = 2^{3-r}\binom{r}{2}\cdot \mathsf{N}^2\cdot n^{r - 1}\;.\]
In particular, we see that these fluctuations are of order $n^{r-1}$ and involve the square of a normal random variable.

\paragraph{Connection to generalized U-statistics.}
During the post-submission revision the authors became aware of a result by Janson and Nowicki~\cite{Janson_Nowicki} on orthogonal decomposition of generalized U-statistics of independent random variables indexed by both vertices and edges of a graph (see also~\cite{KR}). In particular, Theorem 2 in \cite{Janson_Nowicki} implies Theorem~\ref{thm:implicit}. To see this, one has to realize that the clique count can be expressed as a U-statistic by generating independent uniform random variables $Y_{uv} \in [0,1]$ (independent of $U_1, ..., U_n$) and defining $\GG(n,W)$ as the graph with edge set 
\begin{equation*}
    \{ uv :  Y_{uv} \le W(U_u, U_v)\},
\end{equation*}
where $U_v, v \in [n]$ are the random types as defined above. 

The contribution of this paper is thus (i) a direct proof of limit distribution using the method of moments; (ii) explicit description of the limit distribution in terms of the graphon $W$ (which admittedly also can be read out relatively easily from the result of Janson and Nowicki); (iii) establishing a link between asymptotic normality of clique counts and quasirandomness (see Conjecture~\ref{conj} and the following comment). 

\bigskip 
 
Before stating the explicit version of Theorem~\ref{thm:implicit} we need to introduce some definitions about spectra of graphons and more advanced concepts related to subgraph densities. On the way, we also recall facts that will be useful for the proof of the main theorem.

\subsection{Spectrum of a graphon and cycle densities}\label{sss:spectrum}
Much of the spectral theory of graphs carries over to the dense graph limit setting, where graphons play the role that adjacency matrices of graphs play in algebraic graph theory.
Spectral properties of graphons will be crucial in stating and proving the explicit version of Theorem~\ref{thm:implicit}\ref{en:imp_chisquare}. 

In this section, we follow~\cite[Section 7.5]{Lovasz}, where details and proofs can be found. We work with the real Hilbert space $L^2[0,1]$. Suppose that $W:[0,1]^2\rightarrow [0,1]$ is a graphon. Then we can associate with $W$ its kernel operator $T_W:L^2[0,1]\to L^2[0,1]$ by setting 
\[ (T_W f)(x) = \int_{0}^{1} W(x,y) f(y)\; dy  \]
for each $f\in L^2[0,1]$.
The operator $T_W$ is a Hilbert--Schmidt operator and that has a discrete spectrum. That is, there exists a countable multiset, denoted $\Spec(W)$, of non-zero real \emph{eigenvalues associated with $W$}. Moreover, we have that
\begin{equation}\label{eq:eigenvalues}
\sum_{\lambda\in\Spec(W)} \lambda^2 = \int_{[0,1]^2} W(x,y)^2\; dx\; dy \le 1. 
\end{equation}

The \emph{degree function} of a graphon $W$ is the function $\deg_W : [0,1] \to [0,1]$ defined as $\deg_W(x) = \int_y W(x,y) dy$. In order to gain some intuition about the degree function, the reader should note that if a vertex in $\GG(n,W)$ is conditioned to have type $x$, it has expected degree $(n-1)\cdot \deg_W(x)$. 
We say that $W$ is \emph{regular} if $\deg_W(x) \equiv d$ for some constant $d$. (Note that in such case $d = \int_{[0,1]^2} W(x,y)\;dx\;dy$.) 

Observe that if $W$ is regular, then $f \equiv 1$ is an eigenfunction of $T_W$ with eigenvalue $d$. In this case, let $\Spec^-(W)$ be $\Spec(W)$ with the multiplicity of $d$ decreased by~1. (It can also be shown that all eigenvalues are at most $d$ in absolute value, but this is not necessary for our proof.)

One of the most useful properties of eigenvalues of a graphon is that they give a simple expression for cycle densities. Recall that $C_\ell$ is a cycle on $\ell$ vertices (with $C_2$ being a multigraph consisting of a double edge). We have (see \cite[(7.22), (7.23)]{Lovasz}) that
\begin{equation}\label{eq:cycle_eigenvalues}
  t(C_{\ell},W) = \sum_{\lambda\in \Spec(W)} \lambda^\ell, \qquad \text{for }\ell \ge 2\,.
\end{equation}
Note that for $\ell = 2$, the formula \eqref{eq:cycle_eigenvalues} is exactly \eqref{eq:eigenvalues}.

The formula~\eqref{eq:cycle_eigenvalues} plays a key part in the proof Theorem~\ref{thm:main}(c), and is the only fact that we use from the spectral theory of graphons.

\subsection{Conditional densities, \tf{K_r}{K\_r}-regular graphons and  \tf{\vw}{V\_{W}(r)}}\label{sss:VWR}

If $H$ is a fixed multigraph and $W$ is a graphon, the \emph{density} (usually called \emph{homomorphism density}) of $H$ in $W$ is defined as 
\begin{equation}\label{eq:1}
t(H,W):= \mathbb{E}\prod_{\{i,j\}\in E(H)} W(U_i,U_j) \, . 
\end{equation}
(Notice that if the edge $\{i,j\}$ has multiplicity $m$ in $H$, then the corresponding contribution to the density equals $W(U_i,U_j)^m$.)
When $H$ is a simple graph on $k$ vertices, then the constant $t(H,W)\in[0,1]$ is the probability that a particular copy of $H$ is present in $\GG(n,W)$, which implies
\begin{equation*}
  \E [\Xn] = \frac{(n)_k}{\aut(H)}t(H,W),
\end{equation*}
where $\aut(H)$ is the number of automorphisms of $H$, and $(n)_k = n\cdot (n-1)\cdot\ldots\cdot (n-k+1)$.  
In particular we have that $\mathbb{E}X_{n}(K_r,W) = \binom{n}{r} t(K_r,W)$, a fact that will be used several times throughout the paper.

For a natural number $k$, we write $[k]:=\{1,2,\ldots,k\}$. Given an integer $\ell\le k$, let ${\binom{[k]}{\ell}}$ denote all $\ell$-element subsets of $[k]$. Let $J\in {\binom{[k]}{\ell}}$ and suppose that $H$ is a graph on the vertex set $[k]$ for which the vertices from the set $J$ are considered as \emph{marked}. 
Given a vector $\x = (x_j)_{j \in J} \in [0,1]^{\ell}$, we define 
\begin{equation}\label{eq:2}
	t_{\x}(H,W) = \E\left[ \prod_{\{i,j\}\in E(H)} W(U_i,U_j)\;\bigg|\; U_j=x_j: j\in J\right]\;. 
\end{equation}
Again, if $H$ is simple with $|V(H)| = r$, then $t_{\x}(H,W)$ is the conditional probability that $\GG(r,W) = H$ whenever vertex $j$ is prescribed a type $x_j$, for $j\in J$.
Note that, when $H=K_r$ is the $r$-clique, the function $\x \mapsto t_{\x}(K_r,W)$ depends only on the cardinality of $J$ (and not on $J$ itself). In this case, we write $K_{r}^{\bullet}$ and $K_{r}^{\bullet\bullet}$ for $K_{r}$ with one, respectively two, marked vertices and denote the corresponding conditional densities by $t_x(K_{r}^{\bullet},W)$ and $t_{x,y}(K_{r}^{\bullet\bullet},W)$.

A graphon $W$ is called \emph{$K_r$-free} if $t(K_r,W)=0$ and called \emph{complete} if $W$ equals $1$ almost everywhere. 

We say that $W$ is $K_{r}$-\emph{regular} if for almost every $x\in[0,1]$ we have 
\begin{equation*}
	t_{x}\left(K_{r}^{\bullet},W\right)=t(K_{r},W)\;.
\end{equation*}
In the case $r = 2$, we have $t_{x}\left( K_{r}^{\bullet}, W \right) = \deg_W(x)$, hence $K_2$-regularity coincides with the usual concept of regularity. For $r \ge 3$ condition of $K_r$-regularity means that in the random graph $\GG(n,W)$ a vertex is expected to belong to the same number of copies of $K_r$, regardless of its type. Another important aspect of $K_r$-regularity of $W$ is that any two particular copies of $K_r$ that share exactly one vertex are uncorrelated, that is, in $\GG(n,W)$ existence of one of the these two copies does not influence the probability of the existence of the other. As we will see in the proof, if $W$ is \emph{not} $K_r$-regular, then a pair of $K_r$-copies sharing one vertex are \emph{positively} correlated (see \eqref{eq:d1tr}), intuitively causing larger variance of the $K_r$ count.

As we will see in Theorem~\ref{thm:main}, the limit distribution of edges (i.e. case $r = 2$) can be described in terms of the function $\deg_W$ and the spectrum of $W$. For $r \ge 3$, however, we need to consider an auxiliary graphon $\vw$ defined below which encodes the information about the local clique densities in $W$.

Suppose that $W$ is a graphon and $r\ge 2$. Then we define a graphon $\vw:[0,1]^2\to [0,1]$ by setting 
\begin{equation}\label{eq:Volkswagen}
	\vw(x,y) := t_{x,y}(K_{r}^{\bullet\bullet},W).
\end{equation}
So, $\vw(x,y)$ is intuitively the density of $K_r$'s containing $x$ and $y$. Note that~\mbox{$\vwa{2}=W$}.

Suppose that we have two numbers $r\in\mathbb{N}$ and $j\in\{0,\ldots,r\}$. We write $K_{r}\ominus_{j}K_{r}$ for the (simple) graph on $2r-j$ vertices consisting of two copies of $K_{r}$ sharing $j$ vertices. For $j = 2$, we also denote by $K_r \oplus_2 K_r$ the multigraph obtained from $K_r\ominus_2 K_r$ by doubling the shared edge. In particular $K_2 \ominus_2 K_2 = K_2$ and $K_2\oplus_2 K_2 = C_2$.

Denote by $K_r'$ and $K_r''$ two copies of $K_r$ that share exactly two vertices, which have labels $1$ and $2$. We have
\[
  \prod_{ij \in K_r \oplus K_r} W(U_i,U_j) = W(U_1,U_2)\prod_{ij \in K_r \ominus_2 K_r} W(U_i,U_j) = \prod_{ij \in K_r'} W(U_i,U_j) \prod_{ij \in K_r''} W(U_i,U_j)
\]
Taking conditional expectation with respect to the event $U_1 = x, U_2 = y$ and noting that products $\prod_{ij \in K_r'} W(U_i,U_j)$ and $\prod_{ij \in K_r''} W(U_i,U_j)$ are conditionally independent, from definition \eqref{eq:2} we obtain 
\begin{equation}
\label{eq:tKWplus}
t_{x,y}(K_r \oplus_2 K_r, W) = W(x,y) t_{x,y}(K_{r}\ominus_{2}K_{r}, W) = \left( t_{x,y}(K_r^{\bullet\bullet}, W) \right)^2 = \left(\vw(x,y)\right)^2.
\end{equation}
	
Let 
\begin{equation}\label{eq:kissmysoul}
  \sigr^2 :=\frac{1}{2((r-2)!)^{2}}\left(t(K_{r}\ominus_{2}K_{r},W)- t(K_{r} \oplus_2 K_{r}, W)\right)\;.
\end{equation}
We have $\sigr^2 \ge 0$, since, assuming that the two shared vertices have labels $1$ and $2$,
\begin{equation}\label{eq:withthesehairs}
  \begin{split}
  t(K_{r}\ominus_{2}K_{r},W) - t(K_r \oplus_2 K_r, W) &= \int (t_{x,y}(K_{r} \ominus_2 K_{r}) - t_{x,y}(K_r \oplus_2 K_r)) \;dx \;dy\\
  &\ByRef{eq:tKWplus}{=} \int [1 - W(x,y)]t_{x,y}(K_r \ominus_2 K_r) \;dx\; dy\\
    &\ge \int 0 \;dx\;dy = 0.
  \end{split}
\end{equation}

  Observe that 
  \begin{equation*}
    \deg_{\vw}(x) = \int_0^1 \vw(x,y) \;dy = \int_0^1 t_{x,y}(K_r^{\bullet\bullet},W) \;dy = t_x(K_r^{\bullet},W).
  \end{equation*}
So $W$ is $K_r$-regular if and only if $\vw$ is regular, with $\deg_{\vw} \equiv t_r := t(K_r,W)$.
Hence, by the remark we made in Section~\ref{sss:spectrum}, one of the eigenvalues associated with $\vw$ is $t_r$. In this case,  
$\Spec^-(\vw)$ is $\Spec(\vw)$ with the multiplicity of $t_r$ decreased by~1.

\subsection{Statement of the main result}\label{ssec:statement}
We are now ready to state our main result. 

\begin{theorem}\label{thm:main}
	Let $W$ be a graphon. Fix $r\ge 2$ and set $t_{r}=t(K_{r},W)$. Let $X_{n,r}=X_{n}(K_{r},W)$ be the number of
	$r$-cliques in $\GG(n,W)$. Then we have the following.
\begin{enumerate}[label=(\alph*)]
  \item\label{en:nothing} If $W$ is $K_{r}$-free or complete then almost surely $X_{n,r} = 0$ or $X_{n,r} = \binom {n}{r}$, respectively. 
\item\label{en:gaussian} If $W$ is not $K_{r}$-regular, then 
	\begin{equation}\label{eq:normality}
		\frac{X_{n,r}-{\binom n r}t_r}{ n^{r-\frac{1}{2}}}\;\dto\; \rhor\cdot Z \;, 
	\end{equation}
	where $Z$ is a standard normal random variable and $\rhor = \frac{1}{(r-1)!}\left(t(K_{r}\ominus_{1}K_{r},W) -t_r^2 \right)^{1/2} > 0$. 

\item\label{en:chisquare}If $W$ is a $K_{r}$-regular graphon which is neither $K_{r}$-free nor complete, then
\begin{equation}\label{eq:chsquareformula}
\frac{X_{n,r}-{\binom n r}t_r}{n^{r-1}} \;\dto\;\sigr\cdot Z+\frac{1}{2(r-2)!}\sum_{\lambda\in\Spec^-(\vw)}\lambda\cdot (Z_\lambda^{2}-1)\;,
\end{equation}
where $Z$ and $(Z_\lambda)_{\lambda\in\Spec^-(\vw)}$ are independent standard normal and $\sigr$ is defined in \eqref{eq:kissmysoul}.
(The series on the right-hand side of~\eqref{eq:chsquareformula} converges a.s.\ and in $L^{2}$ by Lemma~\ref{lem:BhaDiaMuk}.)
 \end{enumerate}
\end{theorem}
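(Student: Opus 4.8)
The plan is to prove all three parts via the method of moments: we show that every mixed moment of the normalized variable $(X_{n,r} - \binom{n}{r}t_r)/n^{r-c}$ (with $c = 1/2$ in case (b) and $c=1$ in case (c)) converges to the corresponding moment of the claimed limit, and that the limit distribution is determined by its moments (which holds since all limits are polynomials in independent Gaussians, hence have sub-exponential tails, so Carleman's condition is satisfied). Part (a) is trivial: if $t(K_r,W)=0$ then a.s.\ no $r$-set spans a clique, and if $W\equiv 1$ then every $r$-set does.

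The combinatorial heart is a Hoeffding-type decomposition of $X_{n,r}$. Write $X_{n,r} = \sum_{S \in \binom{[n]}{r}} \mathbf{1}_S$, where $\mathbf{1}_S$ indicates that $S$ spans a clique. Conditioning on the types $U_1,\dots,U_n$, the centered indicator $\mathbf{1}_S - t_r$ decomposes orthogonally as a sum over subsets $J \subseteq S$ of ``pure'' terms depending only on $\{U_j : j \in J\}$ (and, for the part not measurable with respect to the types, on the independent edge-variables); concretely, the leading term in the type-only part associated to a singleton $\{i\}$ is $t_{U_i}(K_r^\bullet,W) - t_r$ and the one associated to a pair $\{i,j\}$ is $\vw(U_i,U_j) - \deg_{\vw}(U_i) - \deg_{\vw}(U_j) + t_r$, the degenerate (fully decomposed) kernel of $\vw$. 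When one expands a fixed power $\E\big[(X_{n,r} - \binom{n}{r}t_r)^m\big]$, one gets a sum over $m$-tuples of $r$-sets; grouping by the isomorphism type of the overlap pattern and counting the number of ways to embed it into $[n]$, each pattern contributes $n^{(\text{number of free vertices})}(1+o(1))$ times a graphon density (a $t(\cdot,W)$ of an appropriate union of cliques, glued along the shared vertices, possibly with doubled edges as in \eqref{eq:tKWplus}). The key bookkeeping fact is that a pair of $r$-sets sharing $j$ vertices has covariance of order $n^{-j}$ relative to the trivial scaling, and this covariance \emph{vanishes} when $j=1$ precisely under $K_r$-regularity (this is \eqref{eq:d1tr}, $t(K_r\ominus_1 K_r,W) = t_r^2$ iff $W$ is $K_r$-regular) — which is exactly why the scaling jumps from $n^{r-1/2}$ to $n^{r-1}$.

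For part (b), $W$ not $K_r$-regular: the dominant contribution to $\Var(X_{n,r})$ comes from pairs of $r$-sets sharing a single vertex, giving $\Var(X_{n,r}) = (1+o(1))\,n^{2r-1}\rhor^2$, and in the moment expansion only the ``tree-like'' pairings (each pair of $r$-sets meeting in exactly one common vertex, forming a star of singleton-overlaps) survive after normalization by $n^{r-1/2}$; the surviving sum is, after conditioning, a sum of i.i.d.\ contributions $\sum_i (t_{U_i}(K_r^\bullet,W)-t_r)$ up to lower-order error, so the CLT gives asymptotic normality and the moments match those of $\rhor Z$. Here one must check $\rhor > 0$, i.e.\ that $t_{U_1}(K_r^\bullet,W)$ is non-constant, which is exactly the hypothesis.

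For part (c), $W$ is $K_r$-regular but neither $K_r$-free nor complete: now the singleton-overlap terms contribute $0$, so after normalizing by $n^{r-1}$ the surviving contributions are (i) the ``star of pair-overlaps'' built from the degenerate kernel of $\vw$, and (ii) a residual i.i.d.\ piece not measurable with respect to the types, coming from the edge-randomness within cliques sharing an edge, whose variance per pair is $\tfrac{1}{2((r-2)!)^2}(t(K_r\ominus_2K_r,W)-t(K_r\oplus_2K_r,W)) = \sigr^2$. The edge-piece is asymptotically a sum of i.i.d.\ terms, hence contributes $\sigr Z$; it is independent of everything type-driven. The type-piece is $\tfrac{1}{(r-2)!}$ times a quadratic form $\tfrac12\sum_{i\ne j} V(U_i,U_j)$ in the degenerate symmetric kernel $V$ of $\vw$ (with $V$ having eigenvalues $\Spec^-(\vw)$ after removing the constant eigenfunction, since $K_r$-regularity makes $\mathbf{1}$ an eigenfunction with eigenvalue $t_r$). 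A degenerate U-statistic of order two with kernel having spectral decomposition $V = \sum_\lambda \lambda\, \phi_\lambda \otimes \phi_\lambda$ converges in distribution to $\sum_\lambda \lambda (Z_\lambda^2 - 1)$ — this is the classical de Jong / Rubin–Vitale type result, and its moment version is exactly what the method of moments delivers; invoking Lemma~\ref{lem:BhaDiaMuk} handles the $L^2$/a.s.\ convergence of the infinite series and the truncation to finitely many eigenvalues. Putting the independent pieces together gives \eqref{eq:chsquareformula}.

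\medskip
\noindent\textbf{Main obstacle.} The principal difficulty is the moment bookkeeping in part (c): one must show that \emph{all} overlap patterns other than the two surviving families (disjoint edge-sharing pairs for the $\sigr Z$ part, and the pair-overlap stars for the chi-square part) are of strictly smaller order after dividing by $n^{r-1}$ — in particular ruling out patterns where three or more $r$-sets share a common vertex or where overlaps of size $\ge 3$ occur — and, simultaneously, that the cross-moments between the type-driven quadratic form and the edge-driven linear piece factor asymptotically (i.e.\ the two pieces are asymptotically independent). This requires a careful case analysis of connected overlap hypergraphs weighted by their number of vertices, using $K_r$-regularity to kill the singleton-overlap leading terms, and is where essentially all the work lies; by contrast the spectral identification of the limit, once the surviving quadratic form is isolated, is standard.
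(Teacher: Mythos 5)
Your outline is, in substance, the projection (Hoeffding/Janson--Nowicki) route: decompose each centered indicator orthogonally over subsets of its vertex set, observe that $K_r$-regularity kills the singleton projections, and identify the surviving pieces as a degenerate order-two U-statistic in the kernel $\vw - t_r$ (whose nonzero eigenvalues are exactly $\Spec^-(\vw)$) plus an edge-randomness term of variance $\sigr^2$. This is a legitimate and genuinely different route from the paper's, and indeed the paper states that Theorem~2 of Janson and Nowicki yields Theorem~\ref{thm:implicit} this way. The paper instead proves part~\ref{en:gaussian} by Stein's method with a dependency graph (Theorem~\ref{thm:Stein}), which is shorter than a moment computation and gives a Wasserstein rate $O(n^{-1/2})$; and it proves part~\ref{en:chisquare} by a direct moment computation in which the only overlap patterns surviving the centering and the $K_r$-regularity are disjoint unions of loose cycles $\C_\ell^{(r)}$ (Claims~\ref{claim:1}--\ref{cl:ahoj}), each cycle contributing $t(G_{\ell,r},W)-t_r^\ell$; after the automorphism count (Claim~\ref{claim:2}) the $m$-th moment is read off as $m!\,\llbracket x^m\rrbracket \exp\bigl(\sum_\ell d_\ell x^\ell\bigr)$ and matched with $M_Y$ via the identity $t(C_\ell,\vw)=\sum_{\lambda\in\Spec(\vw)}\lambda^\ell$. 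Your approach buys a more conceptual explanation of \emph{why} the limit is Gaussian-plus-chi-square (projections of different orders), while the paper's buys a self-contained computation whose only external input is the cycle--eigenvalue formula. Your constants check out: the pair-projection count $\binom{n-2}{r-2}\sim n^{r-2}/(r-2)!$ gives exactly the prefactor $\tfrac{1}{2(r-2)!}$ in \eqref{eq:chsquareformula}, and the edge-sharing pair count gives $\sigr^2$ as in \eqref{eq:kissmysoul}.

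There is, however, a genuine gap, and you flag it yourself: the entire combinatorial content of part~\ref{en:chisquare} --- showing that every overlap pattern other than the two surviving families contributes $o(n^{(r-1)m})$ to the $m$-th centered moment, and that the cross-moments between the type-driven quadratic form and the edge-driven part factorize asymptotically --- is deferred rather than carried out. In the paper this is precisely the content of Claims~\ref{claim:1}--\ref{cl:psequal}, i.e.\ essentially the whole of Section~\ref{sec:proofofC}; without it the proposal is a plan, not a proof. Two smaller points also need repair. First, the ``edge-driven'' piece is not a sum of i.i.d.\ terms: its summands are conditionally independent given the types but not unconditionally independent, so you need either a conditional CLT together with concentration of the conditional variance at $\sigr^2$, or you are back to the moment bookkeeping anyway. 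Second, in part~\ref{en:gaussian} you must justify that all non-star pairings are negligible at scale $n^{r-1/2}$; the paper sidesteps this entirely by using Stein's method, where only the variance asymptotics \eqref{eq:Si} and the degree bound \eqref{eq:D} are needed. None of these issues points to a wrong approach --- the decomposition, the identification of $\Spec^-(\vw)$, and the constants are all correct --- but the case analysis you postpone is where the proof actually lives.
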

Let us comment briefly on Theorem~\ref{thm:main}. Part~\ref{en:nothing} is immediate. Part~\ref{en:gaussian} tells us that in this setting we have a behaviour as in the central limit theorem; indeed we could have stated Part~\ref{en:gaussian} as $(X_{n,r}-\E[X_{n,r}])/\sqrt{\Var[X_{n,r}]}\dto Z$. 
Finally, we will see in Remark~\ref{rem:nondegen} below that the limit in \eqref{eq:chsquareformula} is non-degenerate, implying the Theorem~\ref{thm:main} gives a complete description of limit distributions of small cliques in $\GG(n,W)$.

Let us mention that Part~\ref{en:gaussian} has been recently reported in~\cite{FeMeNi:modGaussian}, using a framework of the so-called mod-Gaussian convergence, developed in that paper. This concept actually gives much more: firstly, the authors establish normal behaviour under conditions analogous to those in Part~\ref{en:gaussian} also for other graphs than $K_r$. Secondly, they also prove a moderate deviation principle and a local limit theorem in this setting. So the reason we provide a proof of Part~\ref{en:gaussian} is that ours  --- based on Stein's method --- is much simpler (because we are proving a weaker statement). But the main emphasis of the paper is on Part~\ref{en:chisquare}, which is new and deals with a regime exhibiting a more exotic behaviour.

\subsection{When the distribution in Theorem~\ref{thm:main}\ref{en:chisquare} is normal or normal-free}
Recall that a chi-square distribution with $k$ degrees of freedom  is the distribution of a sum of the squares of $k$ independent standard normal random variables. Therefore the series in~\eqref{eq:chsquareformula} is a weighted infinite-dimensional variant of a chi-square distribution. By~\eqref{eq:eigenvalues} and \eqref{eq:varS} below, this random variable has finite variance. Interestingly, very similar distributions appear in~\cite{BhaDiaMuk} and ~\cite{BhaMuk}, also in connection with graph limits. That said, the particular setting of our paper seems to be substantially different from~\cite{BhaDiaMuk,BhaMuk}.

\begin{proposition}
  \label{prop:purenormal}
  In Theorem~\ref{thm:main}\ref{en:chisquare} the limit distribution is normal if and only if $\vw \equiv t_r$.
\end{proposition}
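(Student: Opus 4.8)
The plan is to analyze when the limit random variable
$$
L \;:=\; \sigr\cdot Z+\frac{1}{2(r-2)!}\sum_{\lambda\in\Spec^-(\vw)}\lambda\cdot (Z_\lambda^{2}-1)
$$
is Gaussian, and to identify this with the condition $\vw \equiv t_r$. The key structural fact is that a linear combination $aZ + \sum_i b_i(Z_i^2-1)$ of an independent standard normal $Z$ and independent centered chi-squares is normally distributed if and only if \emph{all} the coefficients $b_i$ vanish; this is a standard fact provable via cumulants (the fourth cumulant of $aZ + \sum b_i(Z_i^2-1)$ equals $48\sum b_i^4 \ge 0$, with equality iff all $b_i = 0$, whereas every normal distribution has vanishing fourth cumulant) or via characteristic functions (the factor $\prod_i (1-2itb_i)^{-1/2}e^{-itb_i}$ is the characteristic function of a nonnegative-variance non-Gaussian law unless the product is empty). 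I would state and use the cumulant version, since it is the cleanest.

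Given this, $L$ is normal if and only if $\lambda = 0$ for every $\lambda \in \Spec^-(\vw)$, i.e.\ $\Spec^-(\vw) = \emptyset$ (recall $\Spec(\vw)$ consists of \emph{nonzero} eigenvalues by definition). The second step is to translate $\Spec^-(\vw) = \emptyset$ into $\vw \equiv t_r$. Since we are in case \ref{en:chisquare}, $W$ is $K_r$-regular, so by the discussion preceding the theorem $\vw$ is a regular graphon with $\deg_{\vw} \equiv t_r$, hence $f \equiv 1$ is an eigenfunction of $T_{\vw}$ with eigenvalue $t_r$, and $\Spec^-(\vw)$ is $\Spec(\vw)$ with one copy of $t_r$ removed. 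Now $\Spec^-(\vw) = \emptyset$ means the only eigenvalue of $T_{\vw}$ (counted with multiplicity) is a single copy of $t_r$. Because $T_{\vw}$ is a Hilbert--Schmidt (hence compact, self-adjoint) operator, it is diagonalizable and $\|\vw\|_2^2 = \sum_{\lambda\in\Spec(\vw)}\lambda^2$; if the only eigenvalue is $t_r$ with multiplicity one, then $\vw(x,y) = t_r \cdot 1(x)1(y) = t_r$ a.e., i.e.\ $\vw \equiv t_r$. Conversely, if $\vw \equiv t_r$ then $T_{\vw}$ is rank one with the single eigenvalue $t_r$, so $\Spec^-(\vw) = \emptyset$ and $L = \sigr Z$ is normal. (One should also check that $\sigr$ is unaffected: in fact if $\vw \equiv t_r$ then $t_{x,y}(K_r\oplus_2 K_r,W) = t_r^2$ and $t_{x,y}(K_r\ominus_2 K_r,W) = t_r^2/W(x,y)$ where $W(x,y) > 0$, but $\sigr$ may still be positive, which is fine — the limit is Gaussian regardless.)

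The main obstacle, such as it is, is the spectral step: concluding from "the spectrum is a single copy of $t_r$" that $\vw$ is the constant graphon. This is where compactness and the spectral theorem for Hilbert--Schmidt operators (as recalled via \eqref{eq:eigenvalues} and \eqref{eq:cycle_eigenvalues}) do the work — concretely, $\int_{[0,1]^2}\vw^2 = \sum_\lambda \lambda^2 = t_r^2 = \deg_{\vw}(x)^2$ for a.e.\ $x$, and combined with $\int_y \vw(x,y)\,dy = t_r$ and Cauchy--Schwarz (equality case), this forces $\vw(x,\cdot)$ to be a.e.\ constant equal to $t_r$ for a.e.\ $x$. I would present the Cauchy--Schwarz equality argument rather than invoking the eigenfunction expansion, as it is self-contained given only the facts already in the excerpt; the one subtlety to handle carefully is measurability/"for a.e.\ $x$" bookkeeping when passing from $\|\vw\|_2^2 = t_r^2$ to the pointwise equality.
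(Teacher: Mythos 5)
Your proof is correct and follows essentially the same route as the paper's: normality of the limit forces $\Spec^-(\vw)=\emptyset$, and then the equality case of Jensen/Cauchy--Schwarz applied to $t_r^2=\bigl(\int\vw\bigr)^2\le\int\vw^2=\sum_{\lambda\in\Spec(\vw)}\lambda^2=t_r^2$ forces $\vw\equiv t_r$. Your fourth-cumulant argument for the first step (that any nonzero chi-square component precludes normality) is a sound justification of an implication the paper states without proof, but it does not change the approach.
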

\begin{proof}
  In view of~\eqref{eq:chsquareformula}, a normally distributed limit implies that $\Spec^-(\vw)=\emptyset$ and therefore $\Spec(\vw) = \left\{ t_r \right\}$. Recall that $\vw$ is regular with degree function being constant $t_r$. We claim that then $\vw \equiv t_r$. This claim can be viewed as a graphon version of a consequence of Chung--Graham--Wilson Theorem on quasirandom graph sequences\footnote{more precisely, the part ``each spot has the same density $\iff$ non-principal eigenvalues are small'', denoted as $P_4$ and $P_3$ in~\cite[page 158]{AlonSpencer}}; here we give a short self-contained proof. Indeed, regularity of $\vw$ implies $\deg_{\vw} \equiv t_r$, so we have that
  \begin{align*}
    t_r^2&=\left(\int_{[0,1]} \deg_{\vw}(y)\; dy\right)^2=
    \left(\int_{[0,1]^2} \vw(x,y)\; dx\; dy\right)^2\\
    \justify{Jensen's inequality}&\le \int_{[0,1]^2} \left(\vw(x,y)\right)^2\; dx\; dy \eqByRef{eq:eigenvalues} \sum_{\lambda\in \Spec(\vw)}\lambda^2=t_r^2 \, .
  \end{align*}
Since the quadratic function is strictly convex, equality is Jensen's inequality is attained if and only if $\vw$ is constant, which implies $\vw \equiv t_r$. 
\end{proof}
So, the question now is which graphons $W$ lead to a constant graphon $\vw$. 
Since $\vwa{2} = W$, for $r=2$ the answer is clearly given by  constant graphons $W$.
For $r\ge 3$, we put forward the following conjecture, which was first hinted in concluding remarks of~\cite{Reiher_Schacht}.

\begin{conjecture}\label{conj}
  Suppose that $r\ge 3$ and $\vw$ is a constant-$d$ graphon for some $d\in[0,1]$, that is, $t_{\cdot, \cdot}(K_{r}^{\bullet\bullet},W) \equiv d$.
  Then $W$ is $K_r$-free (when $d=0$), or $W \equiv d^{1/{\binom r 2}}$.
\end{conjecture}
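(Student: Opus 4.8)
}
The case $d=0$ is immediate: by the tower property $t(K_r,W)=\int_{[0,1]^2}\vw(x,y)\,dx\,dy=0$, so $W$ is $K_r$-free. Assume $d>0$ from now on. Writing $\vw=W\cdot G$ with $G:=\vw/W$ (a conditional subgraph density, so $0\le G\le 1$), we get $W=d/G\ge d>0$ almost everywhere; in particular the conjecture is immediate when $W$ is $\{0,1\}$-valued (then $W\equiv 1$), so the content is genuinely about graphons taking intermediate values. Put $c:=d^{1/\binom r2}$; we must show $W\equiv c$.

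The first step is to reduce to an equality case of Jensen's inequality. Since $t(K_r,W)=\int_{[0,1]^2}\vw=d$, writing $Y:=\sum_{\{i,j\}\in E(K_r)}\log W(U_i,U_j)$ (integrable, as $W$ is bounded away from $0$) and using $\prod_{\{i,j\}\in E(K_r)}W(U_i,U_j)=e^Y$ in~\eqref{eq:1}, Jensen's inequality yields
\begin{equation*}
  d=t(K_r,W)=\E\!\left[e^{Y}\right]\ \ge\ e^{\E[Y]}=\exp\!\Big(\tbinom r2\textstyle\int_{[0,1]^2}\log W\Big),
\end{equation*}
hence $\int_{[0,1]^2}\log W\le\log c$, with equality if and only if $Y$ is almost surely constant. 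A short argument --- condition on all but one of $U_1,\dots,U_r$, integrate out the remaining variable, and use the symmetry of $W$ --- shows that if $Y$ is a.s.\ constant then $\log W$ is a.s.\ constant, i.e.\ $W\equiv c$. Thus it suffices to prove that the hypothesis $\vw\equiv d$ forces $\int_{[0,1]^2}\log W=\log c$.

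To force this equality, note that $\vw\equiv\mathrm{const}$ is exactly the Euler--Lagrange (stationarity) condition for extremizing the strictly concave functional $W\mapsto\int_{[0,1]^2}\log W$ over graphons with $t(K_r,W)$ held fixed: a direct computation gives that the functional derivative of $t(K_r,\cdot)$ at $W$ equals $\binom r2\,\vw/W$, so stationarity $1/W=\lambda\cdot\binom r2\,\vw/W$ reads precisely $\vw\equiv\mathrm{const}$. By the previous paragraph the constant graphon $c$ is the unique maximizer of this constrained problem, attaining the value $\log c$. Hence Conjecture~\ref{conj} would follow from the statement that, among graphons bounded away from $0$, the constant graphon is the \emph{only} critical point of this problem.

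I expect this last step to be the main obstacle: the constraint set $\{W:t(K_r,W)=d\}$ is not convex --- $t(K_r,\cdot)$ is multilinear, not concave --- so a critical point of the constrained problem need not be the global maximizer, and ruling out spurious critical points appears to require a quantitative (stability) strengthening of the Jensen bound above, which is not obvious. For $r=3$ there is extra structure: $G(x,y)=\int_0^1 W(x,z)W(z,y)\,dz$ is the kernel of $T_W^2$, so the equation reads $W(x,y)\cdot(T_W^2)(x,y)\equiv d$, and $t(K_3,W)=t(C_3,W)=\sum_{\lambda\in\Spec(W)}\lambda^3=d$ by~\eqref{eq:cycle_eigenvalues}; one could try to first show that $W$ is regular (note $\vw\equiv d$ already forces $K_r$-regularity, since then $\deg_{\vw}\equiv d$, i.e.\ $t_x(K_r^{\bullet},W)\equiv d$) and then run a spectral argument in the spirit of the proof of Proposition~\ref{prop:purenormal}. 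For $r\ge 4$ no analogous cycle-density identity is available, and a genuinely new idea seems to be needed.
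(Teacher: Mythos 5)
The statement you are addressing is stated in the paper as a \emph{conjecture}: the paper offers no proof, and only records that the case $r=3$ was settled in~\cite{Reiher_Schacht} (by a quite different, graph-theoretic quasirandomness-forcing argument), while $r\ge 4$ remains open. So there is no paper proof to compare against, and your submission, as you yourself say, is a strategy rather than a proof. The parts you do carry out are correct: the $d=0$ case; the observation that $\vw\equiv d>0$ forces $W\ge d$ a.e.\ (since $\vw=W\cdot G$ with $G\in[0,1]$ the rooted density of $K_r$ minus an edge); the Jensen bound $t(K_r,W)\ge\exp\bigl(\binom r2\int\log W\bigr)$ with equality iff $W$ is a.e.\ constant (the reduction from ``$\sum_{i<j}\log W(U_i,U_j)$ a.s.\ constant'' to ``$\log W$ a.e.\ constant'' is standard and works as you sketch); and the computation identifying $\vw\equiv\mathrm{const}$ as the first-order stationarity condition for maximizing $\int\log W$ subject to $t(K_r,W)=d$.

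The genuine gap is exactly where you locate it, and it is not a technicality: from ``$W$ is a critical point of a constrained optimization problem'' and ``the constant graphon is the unique global maximizer'' one cannot conclude $W$ is that maximizer, because $\{W: t(K_r,W)=d\}$ is not convex and nothing rules out other critical points (saddles, local maxima, or critical points where the constraint $W\le 1$ is active and the stationarity condition degenerates to a KKT inequality). Closing this would require showing that the constrained problem has no spurious critical points, which is essentially a restatement of the conjecture in variational language rather than a reduction of it; no quantitative stability version of the Jensen step that would do this is known. It is also worth noting that the one case that \emph{has} been proved ($r=3$, in~\cite{Reiher_Schacht}) does not go through this entropy/variational route at all, which is some evidence that the missing step is hard. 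In short: the framing is sound and the partial reductions are correct, but the argument does not prove Conjecture~\ref{conj}, and the conjecture remains open for $r\ge 4$.
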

In~\cite{Reiher_Schacht}, the case $r=3$ of the aforementioned conjecture was shown to be true. Therefore, we know that if $W$ is a graphon which is $K_3$-regular and not $K_3$-free, then the only way we can get normal limit distribution in Theorem~\ref{thm:main}\ref{en:chisquare} is when $W$ is a constant graphon. Conjecture~\ref{conj} can also be rephrased as follows: among random graphs $\GG(n,W)$, where $W$ is a $K_r$-regular graphon, only $\GG(n,p)$, $p \in (0,1)$, has asymptotically normal count of $K_r$.

Let us now comment on a complementary question: when is the normal
term absent in~\eqref{eq:chsquareformula}? 
\begin{proposition}
  \label{prop:nonormal}
  We have $\sigr = 0$ if and only if $W(x,y)=1$ for almost every $(x,y)\in[0,1]^2$ for which $t_{x,y}(K_{r}^{\bullet\bullet},W)>0$.
\end{proposition}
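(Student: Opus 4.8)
The plan is to read $\sigr = 0$ off the integral formula~\eqref{eq:withthesehairs} and then to identify the support of the resulting integrand with the set described in the statement. Since the prefactor $\frac{1}{2((r-2)!)^2}$ in~\eqref{eq:kissmysoul} is strictly positive, $\sigr = 0$ is equivalent to $t(K_{r}\ominus_{2}K_{r}, W) = t(K_r\oplus_2 K_r, W)$, and by the computation recorded in~\eqref{eq:withthesehairs} the difference of these two densities equals
\[
  \int_{[0,1]^2} \bigl[1 - W(x,y)\bigr]\, t_{x,y}(K_{r}\ominus_{2}K_{r}, W)\; dx\, dy,
\]
whose integrand is non-negative because $W \le 1$. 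Hence $\sigr = 0$ if and only if, for almost every $(x,y) \in [0,1]^2$, either $W(x,y) = 1$ or $t_{x,y}(K_{r}\ominus_{2}K_{r}, W) = 0$.

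To convert this into the statement of the proposition, I would then show that, up to a null set, the zero set of $(x,y) \mapsto t_{x,y}(K_{r}\ominus_{2}K_{r}, W)$ coincides with the zero set of $\vw = t_{\cdot,\cdot}(K_{r}^{\bullet\bullet}, W)$ (see~\eqref{eq:Volkswagen}). Two facts do this. First, identity~\eqref{eq:tKWplus} gives $\vw(x,y)^2 = W(x,y)\, t_{x,y}(K_{r}\ominus_{2}K_{r}, W)$, so $t_{x,y}(K_{r}\ominus_{2}K_{r}, W) = 0$ implies $\vw(x,y) = 0$. Second, because $K_{r}\ominus_{2}K_{r}$ contains the edge joining its two marked vertices while all other factors lie in $[0,1]$, one has the trivial bound $t_{x,y}(K_{r}\ominus_{2}K_{r}, W) \le W(x,y)$; hence if $\vw(x,y) = 0$, the first identity forces $W(x,y)\, t_{x,y}(K_{r}\ominus_{2}K_{r}, W) = 0$, and in the case $W(x,y) = 0$ the trivial bound makes $t_{x,y}(K_{r}\ominus_{2}K_{r}, W) = 0$ too. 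Combining this with the previous paragraph, $\sigr = 0$ if and only if $W(x,y) = 1$ for almost every $(x,y)$ with $t_{x,y}(K_{r}^{\bullet\bullet}, W) > 0$, which is the claim.

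I do not expect a genuine difficulty here; the only point needing slight care is the identification of the two zero sets, where one must avoid the tempting wrong-direction monotonicity and instead use the quadratic identity~\eqref{eq:tKWplus} together with the fact that the edge between the two marked vertices of $K_{r}\ominus_{2}K_{r}$ is always present.
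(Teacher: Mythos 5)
Your proof is correct and takes essentially the same route as the paper: characterize $\sigr=0$ via the nonnegative integrand in~\eqref{eq:withthesehairs}, then identify the zero set of $t_{x,y}(\ktwok,W)$ with that of $\vw$ using the identity~\eqref{eq:tKWplus}, treating the case $W(x,y)=0$ separately by observing that the marked edge occurs as a factor. No gaps.
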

\begin{proof}
We first observe that $t_{x,y}(K_{r}^{\bullet\bullet},W)>0$ if and only if $t_{x,y}(K_{r}\ominus_{2}K_{r},W)>0$. For $W(x,y) > 0$ this is immediate from the second equality in \eqref{eq:tKWplus}, while for $W(x,y) = 0$ from definition of $t_{x,y}(H,W)$ is is clear that both $t_{x,y}(K_{r}\ominus_{2}K_{r},W) = 0$ and 
  $t_{x,y}(K_{r}^{\bullet\bullet},W) = 0.$  
  From~\eqref{eq:kissmysoul} and \eqref{eq:withthesehairs}, we have $\sigr = 0$ if and only only if $W(x,y)=1$ for almost every $(x,y)\in[0,1]^2$ for which $t_{x,y}(K_{r}\ominus_{2}K_{r},W)>0$. Together with the first observation this proves the proposition.
\end{proof}

For $r=2$ the condition of Proposition~\ref{prop:nonormal} is equivalent to a condition that  $W \in \left\{ 0,1 \right\}$ almost everywhere. 
For $r \ge 3$ we have more freedom for constructions. For example, take $r=3$, partition $[0,1]$ into 6 sets of measure $\frac{1}{6}$ each and put one copy of the complete 3-partite graphon on the first~3 sets and another copy on the last~3 sets. Make arbitrarily wild connections between the 1st and the 4th set, and set the rest of the connections between the first 3 and the last 3 sets to~0 (see Figure~\ref{fig:tri_ex}). Such a graphon $W$ is $K_3$-regular but we have $\sigr = 0$.
\begin{figure}
\begin{center}
		\includegraphics[scale=1]{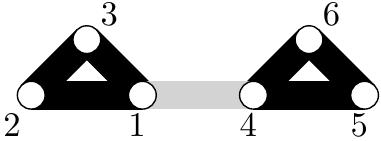}
\end{center}			
	\caption{Example of $K_3$-regular graphon (depicted here as a graph) with no linear term in the non-normal limit.}
	\label{fig:tri_ex}
\end{figure}

  \begin{remark}
    \label{rem:nondegen}
    The limit in~\eqref{eq:chsquareformula} is never degenerate. Indeed, if the non-normal part vanishes, then Proposition~\ref{prop:purenormal} implies $\vw \equiv t_r \in (0,1)$. Further, from \eqref{eq:tKWplus} we obtain that almost everywhere
    \[
    W(x,y)t_{x,y}(\ktwok,W) = \left(\vw(x,y)\right)^2 = t_r^2 > 0.
  \]
    In particular this implies $t_{x,y}(\ktwok,W) > 0$ almost everywhere. Moreover, we have $W < 1$ on a set of positive measure (otherwise $W \equiv 1$ and we would be in the case \ref{en:nothing}), which implies that the inequality in \eqref{eq:withthesehairs} is strict. Since the left-hand side of \eqref{eq:withthesehairs} is a multiple of $\sigr^2$, this implies that $\sigr^2 > 0$.
  \end{remark}
\subsection{Acknowledgment}
We thank the anonymous referee for a very detailed report which helped us to improve the exposition.

\section{Preliminaries}
In this section we state definitions and facts needed in the proof of Theorem~\ref{thm:main}. It can be skipped at the first reading.

Asymptotic notation like $a_n = O(b_n)$ and $a_n \sim b_n$ (equivalently, $a_n = (1 + o(1))b_n$) is stated with respect to $n \to \infty$.
\subsection{Hypergraphs, associated graphs, and further spectral properties}
  \label{ss_hypergraphs}
Given $r \ge 2$, an \emph{$r$-uniform hypergraph} $\Hy$ on vertex set $V$ is a family of $r$-element subsets (called \emph{hyperedges}) of $V$. In this paper we assume that $\Hy$ is a multiset (even though a term \emph{multihypergraph} would be a more standard term). We omit the words ``$r$-uniform'', when this is clear from the context. By $|\Hy|$ we denote the number of hyperedges, counting multiplicities.
The \emph{degree} of a vertex $v\in V$, denoted by $\deg_{\Hy}(v)$, is the number of hyperedges (counting multiplicities) of $\Hy$ containing $v$. We say that $\Hy$ is \emph{spanning} if $V = \cup_{e \in \Hy} e$.
Given a hypergraph $\Hy$, the \emph{graph associated with $\Hy$}, sometimes also called the clique graph of $\Hy$, is a graph on the same vertex set, where each hyperedge $S$ of $\Hy$ is replaced by a clique on $S$, with multiple edges being replaced by single ones. 

We use a particular hypergraph version of cycles, known as loose cycles. For $\ell \ge 2$, let $\C_{\ell}^{(r)}$ be a hypergraph with $\ell$ edges, each of size $r$, created from a graph $C_\ell$ by inserting into each edge additional $r-2$ vertices, all $\ell(r-2)$ new vertices being distinct (hence $\C_{2}^{(r)}$ is a pair of $r$-sets sharing exactly~2 vertices). Finally, let $G_{\ell,r}$ be the graph associated with $\C_\ell^{(r)}$. Note that $G_{2,r} = K_r \ominus_2 K_r$. See Figure~\ref{fig:cycles} for examples.

\begin{figure}
\begin{center}
		\includegraphics[scale=0.8]{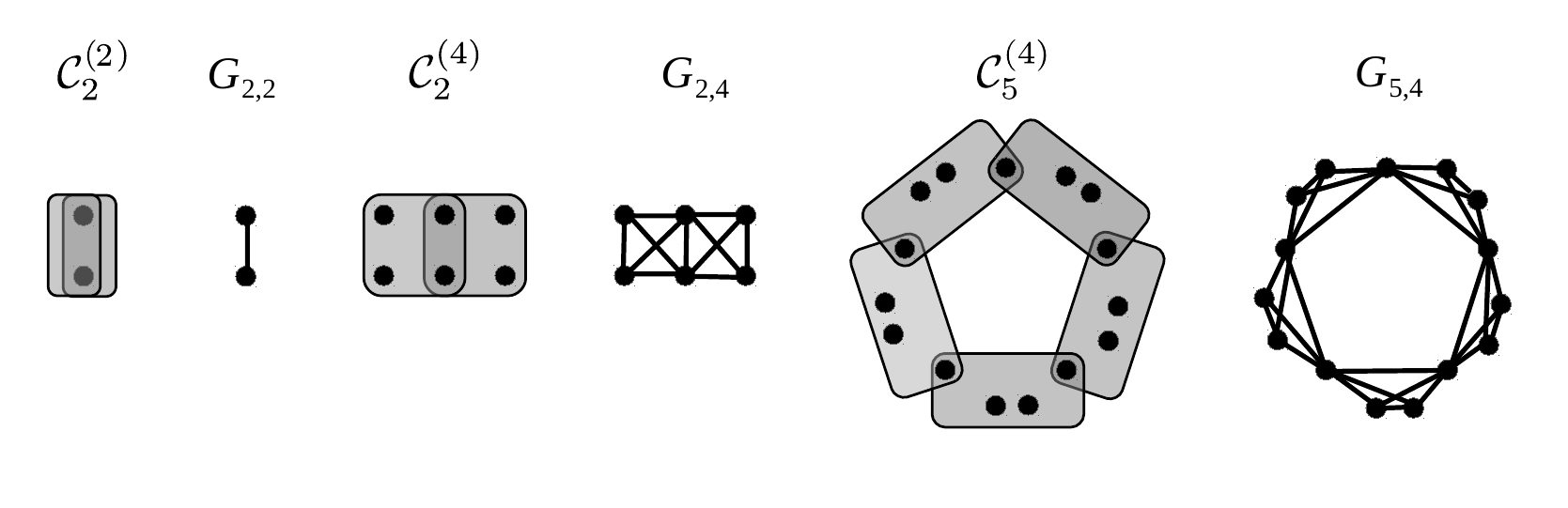}
\end{center}			
	\caption{Examples of hypergraphs $\C_{\ell}^{(r)}$ and their associated graphs $G_{\ell,r}$.}
	\label{fig:cycles}
\end{figure}

We can express the densities of cycles in the graphon $\vw$, defined in \eqref{eq:Volkswagen}, in terms of the graphon~$W$ as follows. 
  Assume that the vertices shared by consecutive $r$-cliques in $G_{\ell,r}$ have labels $1, \dots, \ell$. Denoting $F(x_1, \dots, x_r) = \prod_{1 \le i < j \le r} W(x_i, x_j)$, we write
  \begin{equation*}
    \vw(x_i,x_{i+1}) = \int F(x_i, x_{i+1}, y_{i,3}, \dots, y_{i, r}) \prod_{j =3}^r dy_{i,r},
  \end{equation*}
  whence, writing indices of $x_i$'s modulo $r$ and using Fubini's theorem,
  \begin{align}
  \notag t(C_\ell, \vw) &= \int \prod_{i = 1}^r \left( \int F(x_i, x_{i+1}, y_{i,3}, \dots, y_{i, r}) \prod_{j=3}^r dy_{i,j} \right) \prod_{i=1}^r dx_i \\
  \label{eq:slehacka} &= \int \prod_{i = 1}^r F(x_i, x_{i+1}, y_{i,3}, \dots, y_{i, r}) \;dx_1 \dots \;dx_r \;dy_{1,3}\dots \;dy_{1,r} \dots \;dy_{r,3} \dots \;dy_{r,r}.
  \end{align}
  For $\ell \ge 3$, a moment of thought reveals that the last integral is exactly $t(G_{\ell, r}, W)$, implying
\begin{equation}
	\label{eq:hypercycle_density}
	t\left (C_{\ell},\vw \right) = t\left(G_{\ell,r}, W\right), \qquad \mbox{for $\ell \ge 3$ and $r \ge 2$}\;.
\end{equation}
On the other hand, for $\ell = 2$ we have to be a bit more careful, noting that in the product \eqref{eq:slehacka} factor $W(x_1, x_2)$ appears twice, implying 
  \begin{equation}\label{eq:2_cycle_density}
    t\left( C_2, \vw \right) = t\left( K_r \oplus_2 K_r, W \right), \qquad \mbox{for $r \ge 2$} \;,
	\end{equation}
where, recall, $K_r \oplus_2 K_r$ is a multigraph obtained by gluing two cliques along two vertices (with a double edge between these vertices).

\subsection{Moment generating functions}

As we noted above, the main result of~\cite{BhaDiaMuk} expresses a particular random variable as a sum of squares of independent normal random variables, which is very similar to the expression appearing in our Theorem~\ref{thm:main}\ref{en:chisquare}. The following lemma asserts that such distributions are well-defined and gives a formula of their moment generating functions. Note that in \cite{BhaDiaMuk} only the convergence of the series in $L^1$ is asserted, but it clearly converges in $L^2$ as well, since $S_n := \sum_{j=1}^n \lambda_j \left( Z_j^2 - 1 \right)$ is easily seen to be a Cauchy sequence in the (complete) space~$L^2$. In particular this implies that $\Var [S_n] \to \Var [S]$.

\begin{lemma}[see \cite{BhaDiaMuk}, Proposition 7.1\footnote{Note that there is an error in the arXiv version of Proposition 7.1 in \cite{BhaDiaMuk}.}]
	\label{lem:BhaDiaMuk} 
	Let $\{\lambda_j\}_j$ be a finite or countable sequence of real numbers such that $\sum_j\lambda_j^2 < + \infty$ and  
	let $\{Z_j\}_{j}$ be independent standard normal random variables. Define a (possibly infinite) sum  $S=\sum_j \lambda_j (Z_{j}^{2}-1)$. Then $S$ converges almost surely and in 
	  $L^2$ and
	\begin{equation}
	  \label{eq:varS}
	  \Var [S] = 2 \sum_j \lambda_j^2.
	\end{equation} 
	Furthermore, the moment generating function $M_S(t) := \E\left[\exp(tS)\right]$ is finite for $|t| < \frac{1}{8}\left( \sum_j \lambda_j^2 \right)^{-1/2}$ and equals
	\[ 
	M_S(t) = \prod_{j} \frac{\exp(-\lambda_j t)}{\sqrt{1-2\lambda_j t}} \;. \] 
\end{lemma}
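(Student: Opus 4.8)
The plan is to reduce everything to the one-dimensional computation for a single weighted centered chi-square variable and then pass to the limit; we may assume $\Lambda := \big(\sum_j\lambda_j^2\big)^{1/2} > 0$, the case $\Lambda = 0$ being trivial.

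\emph{Step 1 (a.s.\ and $L^2$ convergence, and the variance).} Write $S_n = \sum_{j=1}^n \lambda_j(Z_j^2-1)$. The summands are independent, have mean $0$, and variance $\lambda_j^2\Var[Z_j^2] = 2\lambda_j^2$ (using $\E[Z_j^4] = 3$). Hence $\Var[S_n] = 2\sum_{j=1}^n\lambda_j^2$ and, by orthogonality of the increments, $\|S_m - S_n\|_2^2 = 2\sum_{j=n+1}^m\lambda_j^2 \to 0$ as $m > n \to \infty$; so $(S_n)$ is Cauchy in $L^2$ and converges in $L^2$ to some $S$. In particular $\Var[S] = \lim_n \Var[S_n] = 2\sum_j\lambda_j^2$, which is \eqref{eq:varS}. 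Almost sure convergence follows from Kolmogorov's one-series theorem (a sum of independent centered random variables with summable variances converges a.s.), or equivalently from the martingale convergence theorem applied to the $L^2$-bounded martingale $(S_n)$.

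\emph{Step 2 (one term and the finite products).} For a standard normal $Z$ and $t < \tfrac12$, a Gaussian integral gives $\E[e^{tZ^2}] = (1-2t)^{-1/2}$, hence $\E[e^{t(Z^2-1)}] = e^{-t}(1-2t)^{-1/2}$. By independence, for every $n$ and every $t$ with $2\lambda_j t < 1$ for all $j \le n$,
\[
\E\big[e^{tS_n}\big] = \prod_{j=1}^n \frac{e^{-\lambda_j t}}{\sqrt{1-2\lambda_j t}}.
\]
Since $|\lambda_j| \le \Lambda$ for all $j$, if $|t| < \frac{1}{8\Lambda}$ then $|2\lambda_j t| \le \tfrac14$ for every $j$, so each factor above is well defined and positive.

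\emph{Step 3 (the infinite product and the interchange).} Write the $j$-th factor as $\exp\!\big(g(2\lambda_j t)\big)$, where $g(u) := -\tfrac u2 - \tfrac12\log(1-u)$. Since $g(0)=g'(0)=0$, $g\ge 0$ and $g$ is smooth on $(-1,1)$, there is an absolute constant $C$ with $0 \le g(u) \le Cu^2$ for $|u| \le \tfrac14$. Therefore $\sum_j g(2\lambda_j t) \le 4Ct^2\sum_j\lambda_j^2 = 4C\Lambda^2 t^2 < \infty$, so the infinite product $\prod_j \frac{e^{-\lambda_j t}}{\sqrt{1-2\lambda_j t}}$ converges to a finite positive limit. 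To justify $\E[e^{tS_n}] \to \E[e^{tS}]$, fix $|t| < \frac{1}{8\Lambda}$ (the case $t=0$ being trivial) and choose $p>1$ with $p|t| < \frac{1}{8\Lambda}$; applying the estimate above with $pt$ in place of $t$ gives $\sup_n \E[e^{ptS_n}] \le \exp(4C\Lambda^2 p^2 t^2) < \infty$, so $(e^{tS_n})_n$ is bounded in $L^p$ and hence uniformly integrable. Since $e^{tS_n} \to e^{tS}$ a.s.\ by Step 1, we conclude $M_S(t) = \E[e^{tS}] = \lim_n \E[e^{tS_n}] = \prod_j \frac{e^{-\lambda_j t}}{\sqrt{1-2\lambda_j t}}$, which is finite; this is the claimed formula.

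\emph{Main obstacle.} The only genuinely delicate point is the interchange of limit and expectation in Step~3; it is handled by the $L^p$-boundedness estimate, which relies on the elementary bound $0 \le g(u) \le Cu^2$ near $0$ together with the slightly sub-optimal radius $\frac{1}{8\Lambda}$ (leaving room to enlarge $|t|$ by a factor $p>1$). Everything else — the $L^2$/a.s.\ convergence and the variance formula in Step~1, and the single-variable moment generating function in Step~2 — is routine.
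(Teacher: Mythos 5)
Your proof is correct and complete. For context: the paper does not actually prove this lemma itself --- it cites Proposition~7.1 of the referenced work for the almost sure convergence and the moment generating function formula, and only adds (in the sentence immediately preceding the statement) the observation that $S_n=\sum_{j\le n}\lambda_j(Z_j^2-1)$ is Cauchy in the complete space $L^2$, which yields the $L^2$ convergence and $\Var[S_n]\to\Var[S]$. That added remark is exactly your Step~1, so there you match the paper; your Steps~2 and~3 supply a self-contained proof of the part the paper outsources. The key points all check out: $\Var[\lambda_j(Z_j^2-1)]=2\lambda_j^2$ since $\E[Z^4]=3$; the single-term identity $\E[e^{t(Z^2-1)}]=e^{-t}(1-2t)^{-1/2}$ for $t<\tfrac12$; the bound $0\le g(u)\le Cu^2$ on $|u|\le\tfrac14$ is legitimate because $g(u)=-\tfrac u2-\tfrac12\log(1-u)$ is convex with $g(0)=g'(0)=0$ and $g''$ bounded on that interval, and together with $|\lambda_j|\le\Lambda$ it controls both the infinite product and the uniform $L^p$ bound; and the slack in the radius $\tfrac{1}{8\Lambda}$ (any constant larger than $2$ in the denominator would do) is precisely what lets you replace $t$ by $pt$ to get uniform integrability and hence $\E[e^{tS_n}]\to\E[e^{tS}]$. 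A minor remark: finiteness of $M_S(t)$ alone already follows from Fatou's lemma applied to the nonnegative sequence $e^{tS_n}$, but the uniform integrability is needed for the equality with the infinite product, so your route is the appropriate one.
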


\subsection{Stein's method and the Wasserstein distance}\label{ssec:toolStein}

Stein's method is one of the most powerful tools for obtaining limit theorems (in fact it gives much more, by providing bounds on approximation errors). It is particularly efficient for sums of dependent random variables, when each variable depends on a relatively small (but not necessarily bounded) number of other variables. Here, we follow a survey article by Ross~\cite[Section~3]{Ross}. 

  The dependency structure of a collection of random variables $(Y_i : i \in I)$ is encoded by a \emph{dependency graph} $\mathcal{G}$ (on a vertex set $I$) if, for all $i\in I$, $Y_i$ is independent of the random variables $\{Y_j\}_{j\notin N_i}$, where $N_i$ is the neighborhood of $i$ in $\mathcal{G}$ (including $i$ itself). In general, a dependency graph is not uniquely determined, but in many scenarios, there exists a dependency graph, which naturally arises by capturing the obvious dependencies, and which is also minimal among all dependency graphs.

We shall work with the Wasserstein distance between two random variables, say $X$ and $Y$, which we denote by $\dw(X,Y)$. We do not need an exact definition --- which the reader can find on page~214 of~\cite{Ross} --- since the only property of the Wasserstein distance that we will use is that for $Z \sim \mathcal{N}(0,1)$ and a sequence $X_n$ of random variables $\dw(X_n,Z)\rightarrow 0$ implies that $X_n \dto Z$ (see~\cite[Section~3]{Ross}). We will use the following off-the-shelf bound for the Wasserstein distance based on Stein's method.
\begin{theorem}[see Theorem~3.6 in~\cite{Ross}]
  \label{thm:Stein}
  Let $(Y_i : i \in I)$ be a finite collection of random variables such that for every $i \in I$ we have $\E [Y_i] = 0$ and $\E [Y_i^4] < \infty$. Writing $\sigma^2 := \Var \left[\sum_{i \in I} Y_i\right]$, define $Q = \sum_{i \in I} Y_i/\sigma$. Let $\mathcal{G}$ be a dependency graph of $(Y_i : i \in I)$. Writing $D = \max_{i \in I} |N_i|$, we have 
  \begin{equation}
    \label{eq:vodni_kamen}
    \dw(Q,Z) \le \frac{D^{2}}{\sigma^{3}}\cdot\sum_i\mathbb{E}\left[|Y_i|^{3}\right]+\frac{\sqrt{28}D^{3/2}}{\sqrt{\pi}\cdot \sigma^{2}}\cdot\sqrt{\sum_i \mathbb{E}\left[Y_i^{4}\right]}.
  \end{equation}
\end{theorem}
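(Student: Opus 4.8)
The plan is to run Stein's method for normal approximation in the Wasserstein metric. First I would recall the Stein equation: for a fixed $1$-Lipschitz test function $h$, there is a twice-differentiable solution $f=f_h$ of $f'(x)-xf(x)=h(x)-\E[h(Z)]$ satisfying the classical Wasserstein-case bounds $\|f'\|_\infty\le\sqrt{2/\pi}$ and $\|f''\|_\infty\le 2$. Since $\dw(Q,Z)=\sup_h\big(\E[h(Q)]-\E[h(Z)]\big)$ over $1$-Lipschitz $h$, and $\E[h(Q)]-\E[h(Z)]=\E[f'(Q)-Qf(Q)]$, it suffices to bound $\big|\E[f'(Q)-Qf(Q)]\big|$ uniformly over the solutions $f$ arising in this way.

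Next I would exploit the dependency graph. Write $W_i:=Y_i/\sigma$, so that $Q=\sum_iW_i$, $\E[W_i]=0$, and $\Var[Q]=1$. For each $i$ let $N_i$ be its closed neighbourhood in $\mathcal G$ and put $S_i:=\sum_{j\in N_i}W_j$; the defining property of a dependency graph is that $W_i$ is independent of $Q-S_i=\sum_{j\notin N_i}W_j$. Hence $\E[W_if(Q-S_i)]=\E[W_i]\,\E[f(Q-S_i)]=0$, and a second-order Taylor expansion of $f(Q-S_i)$ around $Q$ gives, for suitable intermediate points $\eta_i$,
\[
\E[W_if(Q)]=\E\big[W_i(f(Q)-f(Q-S_i))\big]=\E[W_iS_i\,f'(Q)]-\tfrac12\E\big[W_iS_i^2f''(\eta_i)\big].
\]
Summing over $i$ and setting $T:=\sum_iW_iS_i$, we get $\E[Qf(Q)]=\E[Tf'(Q)]-\tfrac12\sum_i\E[W_iS_i^2f''(\eta_i)]$. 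Moreover $\E[T]=\sum_i\sum_{j\in N_i}\E[W_iW_j]=\sum_{i,j}\E[W_iW_j]=\Var[Q]=1$, because $\E[W_iW_j]=\E[W_i]\E[W_j]=0$ whenever $j\notin N_i$ and the closed neighbourhood relation is symmetric. Therefore
\[
\E[f'(Q)-Qf(Q)]=\E\big[(1-T)f'(Q)\big]+\tfrac12\sum_i\E\big[W_iS_i^2f''(\eta_i)\big],
\]
and it remains to bound these two error terms.

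For the Taylor-remainder term, $\big|\tfrac12\sum_i\E[W_iS_i^2f''(\eta_i)]\big|\le\tfrac12\|f''\|_\infty\sum_i\E[|W_i|S_i^2]$; using $S_i^2\le|N_i|\sum_{j\in N_i}W_j^2\le D\sum_{j\in N_i}W_j^2$ together with Young's inequality $|W_i|W_j^2\le\tfrac13|W_i|^3+\tfrac23|W_j|^3$, one obtains $\sum_i\E[|W_i|S_i^2]\le D^2\sum_i\E[|W_i|^3]$, so this contribution is at most $D^2\sum_i\E[|W_i|^3]=\sigma^{-3}D^2\sum_i\E[|Y_i|^3]$. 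For the other term, Cauchy--Schwarz and $\E[T]=1$ give $\big|\E[(1-T)f'(Q)]\big|\le\|f'\|_\infty\,\E|1-T|\le\sqrt{2/\pi}\,\sqrt{\Var[T]}$, and one is left to verify $\Var[T]=\Var\big(\sum_iW_iS_i\big)\le 14\,D^3\sum_i\E[W_i^4]$; substituting $W_i=Y_i/\sigma$ then yields exactly the stated $\tfrac{\sqrt{28}}{\sqrt\pi}\,\tfrac{D^{3/2}}{\sigma^2}\sqrt{\sum_i\E[Y_i^4]}$. The main obstacle is this last estimate: expanding $\Var[T]$ into covariances $\mathrm{Cov}(W_iW_j,W_kW_l)$ with $j\in N_i$ and $l\in N_k$, one must argue that such a covariance vanishes unless $\{i,j\}$ meets $\{k,l\}$ in $\mathcal G$ (some vertex of one pair lies in the closed neighbourhood of a vertex of the other), count the surviving quadruples with the correct powers of $D$, and bound each covariance by fourth moments of the $W_i$ via Cauchy--Schwarz and the AM--GM inequality, tracking the numerical constant. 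All the conceptual ingredients — the Stein equation, the dependency-graph independence, and the Taylor expansion — are routine; only this bookkeeping for $\Var[T]$ requires care.
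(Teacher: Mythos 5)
The paper does not prove this statement: it is imported verbatim as Theorem~3.6 of Ross's survey, so there is no internal proof to compare against. Your argument is, in structure, exactly Ross's own proof of that theorem --- the Stein equation with the Wasserstein-case bounds $\|f'\|_\infty\le\sqrt{2/\pi}$ and $\|f''\|_\infty\le 2$, the decomposition $\E[Qf(Q)]=\sum_i\E[W_i(f(Q)-f(Q-S_i))]$ via the dependency neighbourhoods, the Taylor expansion, the identity $\E[T]=1$, and Cauchy--Schwarz applied to $\E[(1-T)f'(Q)]$ --- and the two error terms you isolate are precisely the two terms of the claimed bound, with the constants matching ($\sqrt{2/\pi}\cdot\sqrt{14}=\sqrt{28/\pi}$, and the Young-inequality bookkeeping for the third-moment term is correct, using $|\{i:j\in N_i\}|=|N_j|\le D$).

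Two caveats. First, the one quantitative step that actually produces the constant $28$, namely $\Var[T]\le 14\,D^3\sum_i\E[W_i^4]$, is stated but not carried out; you correctly identify it as the crux, but as written the proof is incomplete exactly there. Second, your claim that $\mathrm{Cov}(W_iW_j,\,W_kW_l)$ vanishes unless $\{i,j\}$ meets the neighbourhood of $\{k,l\}$ needs more than the pairwise definition of a dependency graph recalled in the paper (``$Y_i$ is independent of $\{Y_j\}_{j\notin N_i}$''): you must know that the \emph{pair} $(W_i,W_j)$ is jointly independent of the pair $(W_k,W_l)$, i.e.\ the stronger standard property that the families indexed by two non-adjacent vertex sets are mutually independent. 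That stronger property holds for the natural dependency graph in the paper's application (each $Y_R$ is a function of independent underlying variables indexed by vertices and pairs), but in a self-contained proof of the theorem it should either be added as a hypothesis or justified before expanding $\Var[T]$ into covariances.
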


\section{Proof of Theorem~\ref{thm:main}\ref{en:gaussian}}\label{sec:odrazko}

We will apply Theorem~\ref{thm:Stein} to a collection $(Y_R : R \in \binom {[n]}{r})$ of random variables indexed by $r$-sets $R$ of vertices. Writing $I_R$ for the indicator that $R$ induces a clique in $\GG(n,W)$, we define $Y_R = I_R - \E [I_R] = I_R - t_r$. In this notation we have $X_{n,r} - {\binom n r}t_r = \sum_{R \in \binom{[n]}{r}} Y_R$.

We let $\mathcal{G}$ be the natural dependency graph of the collection $(Y_{R}: R \in \binom{[n]}{r})$ with edges corresponding to pairs $R_1 R_2$ such that $R_1 \cap R_2 \neq \emptyset$.

The proof consists of two steps. In the first one we bound the maximum degree of $\mathcal{G}$. This step is fairly straightforward. In the second and less elementary step we compute the asymptotics of the variance of $\sum_R Y_R$. As we will see, the leading term in the asymptotics comes from pairs of cliques that share exactly one vertex. 

Let us proceeding with the details of the first step. Notice that in $\mathcal{G}$ every neighbourhood $N_R$ has the same size $D$, namely,
\begin{equation}\label{eq:D}
	D = \sum_{\ell = 1}^{r}\binom{r}{\ell}\binom{n-r}{r-\ell} = O(n^{r-1}).
\end{equation}

We now turn to the second step. Recall that $K_{r}\ominus_{j}K_{r}$ is a simple graph consisting of two $r$-cliques sharing $j$ vertices. Set $d_j = t(K_{r}\ominus_{j}K_{r} ,W)$, for $j\in [r]$. 
By Jensen's inequality we have that
\begin{align}
\label{eq:d1tr}
t_r^2=\left(\int_0^1 t_x(K_{r}^{\bullet},W) \; dx\right)^2 < \int_0^1 t_x(K_{r}^{\bullet},W)^2 \; dx = d_1,
\end{align}
where the strict inequality follows from the assumption that $W$ is not $K_r$-regular.

For disjoint $R_1, R_2$, variables $Y_{R_1}, Y_{R_2}$ are independent, while $|R_1 \cap R_2| = \ell \ge 1$ implies $\E[Y_{R_1}Y_{R_2}] = d_\ell - t_r^2$, and \eqref{eq:d1tr} implies $\rhor^2 = (d_1 - t_r^2)/((r-1)!)^2 > 0$. Since $\E [Y_R] = 0$ for every $R$, we have $\sigma_n^2 := \Var \left[\sum_R Y_R \right] = \sum_{R_1,R_2} \E [Y_{R_1} Y_{R_2}]$. Since the number of ordered pairs $R_1, R_2$ such that $|R_1 \cap R_2| = \ell$ is 
\[
  \binom{n}{\ell}\binom{n-\ell}{r-\ell}\binom{n-r}{r-\ell} \sim \frac{n^{2r - l}}{\ell!(r - \ell)!^2},
\]
we obtain 
\begin{align}
 \nonumber
 \sigma_n^2  &=\sum_{\ell=1}^{r}\frac{n^{2r - \ell}(1 + o(1))}{\ell!(r - \ell)!^2}\left(d_{\ell}-t_{r}^{2}\right)\\
 \nonumber
 &= \frac{n^{2r - 1}}{(r-1)!^2}\left(d_{1}-t_{r}^{2}\right)+\sum_{\ell=2}^{r}O(n^{2r-\ell})\\
\label{eq:Si}
&\sim \rhor^2  n^{2r-1}\;.
\end{align}

Writing $Q_n = \sum_{R \in \binom{[n]}{r}} Y_R / \sigma_n$, we are ready to apply Theorem~\ref{thm:Stein}. Crudely bounding each of the sums of moments on the right-hand side of \eqref{eq:vodni_kamen} by $\binom n r \le n^r$, and using \eqref{eq:D} and \eqref{eq:Si}, we obtain that $\dw(Q_n,Z) = O(n^{-1/2}) \to 0$. 
By the remark we made just before Theorem~\ref{thm:Stein}, we conclude that $Q_n \dto Z$. In view of \eqref{eq:Si} and Slutsky's theorem,
 \begin{equation*}
	 \frac{\sum_{R \in \binom{[n]}{r}} Y_R}{n^{r-1/2}} = \frac{\sigma_n}{n^{r-1/2}}\cdot Q_n \dto \rhor Z,
 \end{equation*}
 which completes the proof, in view of $\sum_{R \in \binom{[n]}{r}} Y_R = X_{n,r} - {\binom n r}t_r$.

\section{Proof of Theorem~\ref{thm:main}\ref{en:chisquare}}\label{sec:proofofC}
Define a random variable as on the right-hand side of~\eqref{eq:chsquareformula},
\begin{equation}\label{eq:Y}
  Y = \sigr \cdot Z+\frac{1}{2(r-2)!}\sum_{\lambda\in\Spec^-(\vw)}\lambda\cdot (Z_\lambda^{2}-1).
\end{equation}
We employ the method of moments, in the way it is described in Section~6.1 of \cite{Janson_Luczak_Rucinski}. For this it is enough to show that the moments of the random variable $({X_{n,r}-{\binom n r}t_r})/{n^{r-1}}$ converge to the corresponding moments of the random variable $Y$, and to verify that the moment generating function $M_Y(t) = \E [e^{tY}]$ is finite in some neighbourhood of zero (so that the distribution of $Y$ is determined by its moments).

Recall that for a standard normal random variable $Z$ we have $M_Z(x)=\exp\left(x^2/2\right)$ and hence $M_{\sigr \cdot Z}(x) = \exp\left(\frac{\sigr^{2}x^{2}}{2}\right)$.
On the other hand, Lemma~\ref{lem:BhaDiaMuk} tells us that the moment generating function of the second summand in \eqref{eq:Y} is $\prod_{\lambda\in\Spec^-(\vw)}\exp\left(-\frac{\lambda x}{2(r-2)!}\right)/\sqrt{1-\frac{\lambda x}{(r-2)!}}$. Since the moment generating function of a sum of independent random variables equals the product of the moment generating functions of individual generating functions, it follows that
\begin{equation}\label{eq:psh}
M_Y(x) = \exp\left(\frac{\sigr^{2}x^{2}}{2}\right)\prod_{\lambda\in\Spec^-(\vw)}\frac{\exp\left(-\frac{\lambda x}{2(r-2)!}\right)}{\sqrt{1-\frac{\lambda x}{(r-2)!}}}\, .
\end{equation}

On the other hand, to compute the moments of $({X_{n,r}-{\binom n r}t_r})/{n^{r-1}}$, we write
\[
X_{n,r}-\binom{n}{r}t_r = \sum_{R \in \binom {n}{r}} \left(I_R - t_r\right), 
\]
where, as in Section~\ref{sec:odrazko}, $I_R$ is the indicator of the event that the set of vertices $R$ induces a clique in $\GG(n,W)$.
Given an $m$-tuple $(R_1,\ldots,R_m)$ of (not necessary distinct) elements of ${\binom{[n]}{r}}$, let 
\begin{equation}\label{eq:defineDelta}
\Delta(R_1,\ldots,R_m) := \E\left[ \prod_{i=1}^{m}(I_{R_i}-t_r) \right] \, .
\end{equation}
Then  
\begin{equation}\label{eq:delta_terms} 
	\E\left[\left(X_{n,r}-{\binom n r}t_r\right)^m \right]  = \sum_{(R_1,\ldots,R_m)\in {\binom {[n]}{r}}^m} \Delta(R_1,\ldots,R_m) \, .
\end{equation} 
It is hence natural to analyze $\Delta(R_1,\ldots,R_m)$ depending on the structure of the tuple $(R_1,\ldots,R_m)$. Our plan is as follows. First, we introduce a certain family $\FamilyX(n,r,m)\subset {\binom{[n]}{r}}^m$ and show that $\Delta(R_1,\ldots,R_m)=0$ for each $(R_1,\ldots,R_m)\in \FamilyX(n,r,m)$. Next, we define another family $\mathfrak{F}(n,r,m)\subset {\binom{[n]}{r}}^m\setminus \FamilyX(n,r,m)$. In~\eqref{eq_counting_tuples} we will show that the set ${\binom{[n]}{r}}^m\setminus (\FamilyX(n,r,m)\;\cup\; \mathfrak{F}(n,r,m))$ has size $O(n^{(r-1)m-1})$, and hence the corresponding tuples have a contribution which is negligible with respect to the renormalization of~\eqref{eq:delta_terms} by $n^{-(r-1)m}$ (as, for example, in the statement of Claim~\ref{cl:generatingfunction}). We then classify the tuples in $\mathfrak{F}(n,r,m)$ according to the pattern in which they overlap so that in each class every tuple the contribution $\Delta(R_1,\ldots,R_m)$ is the same. By obtaining an explicit expression for this contribution  (in Claim~\ref{cl:ahoj}) and counting the number of tuples in each class (in Claim~\ref{claim:2}) we will arrive at a rather complicated asymptotic formula, which we interpret as a coefficient of some reasonably simple power series (see Claim~\ref{cl:generatingfunction}). Finally, with some luck we discover that this power series is exactly the moment generating function $M_Y$ (see Claim~\ref{cl:psequal}). Let us give details now.
	
Let $\FamilyX(n,r,m)\subset {\binom{[n]}{r}}^m$ be those $m$-tuples $(R_1,\ldots,R_m)$ for which we have $|R_i\cap (\cup_{j\neq i}R_j)|\le 1$ for some $i\in [m]$. Suppose now that $(R_1,\ldots,R_m)\in \FamilyX(n,r,m)$. Without loss of generality, suppose that $|R_m\cap (\cup_{j=1}^{m-1}R_j)|\le 1$. Assume first that $|R_m\cap (\cup_{j=1}^{m-1}R_j)|=1$, say $\{v\}=R_m\cap (\cup_{j=1}^{m-1}R_j)$. If we condition on $U_v = x$, the indicator $I_{R_m}$ becomes independent of $\left\{ I_{R_i} : i \in [m-1] \right\}$, and hence
\begin{equation}\label{eq:Kalahari}
	\Delta(R_1,\ldots,R_m)=\int_0^1 \E \left[ I_{R_m} - t_r \:\big|\: U_v = x\right]\cdot \E\left[\prod_{i=1}^{m-1}(I_{R_i}-t_r)\:\big|\:U_v = x\right]\;dx \, .
\end{equation}
Since $W$ is $K_r$-regular, we have $\E \left[ I_{R_m} - t_r \:\big|\: U_v = x \right] = t_{x}(K_r^\bullet,W) - t_r=0$ for almost every $x$. 
Therefore $\Delta(R_1,\ldots,R_m)=0$, as claimed. An even simpler calculation yields the same conclusion when $|R_m\cap (\cup_{j=1}^{m-1}R_j)|=0$. 
Hence, we can rewrite~\eqref{eq:delta_terms} as
\begin{equation}\label{eq:delta_termsecono}
\E\left[\left(X_{n,r}-t_r{\binom n r}\right)^m \right]  = \sum_{(R_1,\ldots,R_m)\in {\binom{[n]}{r}}^m\setminus \FamilyX(n,r,m)} \Delta(R_1,\ldots,R_m) \;.
\end{equation}

Every $m$-tuple $(R_1,\ldots,R_m)$ can be identified with a spanning hypergraph henceforth denoted $\Hy(R_1,\ldots,R_m)$, with vertex set $\cup_i R_i$ and 
hyperedge multiset $\{R_1,\ldots,R_m\}$. 

The following claim provides a sharp upper bound on the number of vertices in  $\Hy(R_1,\ldots,R_m)$, for 
$(R_1,\ldots,R_m)\in {\binom{[n]}{r}}^m\setminus \FamilyX(n,r,m)$.

\begin{claim}\label{claim:1}
	Suppose that $(R_1,\ldots,R_m)\in {\binom{[n]}{r}}^m\setminus \FamilyX(n,r,m)$. The number $v$ of vertices in the hypergraph $\Hy=\Hy(R_1,\ldots,R_m)$ satisfies $v\le (r-1)m$. The equality is attained if and only if each hyperedge in $\Hy$ contains exactly~2 vertices of degree~2 and all other vertices have degree~1. 
\end{claim}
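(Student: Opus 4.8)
The plan is to prove both the bound and the equality characterization by a single double‑counting of the vertex–hyperedge incidences of $\Hy=\Hy(R_1,\dots,R_m)$, using the constraint that comes from $(R_1,\dots,R_m)\in{\binom{[n]}{r}}^m\setminus\FamilyX(n,r,m)$.

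First I would unpack the hypothesis: not lying in $\FamilyX(n,r,m)$ means $|R_i\cap(\cup_{j\neq i}R_j)|\ge 2$ for every $i\in[m]$. Since a vertex of $R_i$ has degree $1$ in $\Hy$ exactly when it lies in no other $R_j$, this says that each hyperedge contains at most $r-2$ vertices of degree $1$. As every degree‑$1$ vertex belongs to a unique hyperedge, summing over $i$ gives that the total number $a$ of degree‑$1$ vertices of $\Hy$ satisfies $a\le(r-2)m$. Next I would count incidences two ways: on one hand $\sum_{x}\deg_{\Hy}(x)=rm$, and on the other, writing $v=a+b$ where $b$ is the number of vertices of degree $\ge 2$, we have $\sum_{x}\deg_{\Hy}(x)\ge a+2b=2v-a$. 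Combining, $v\le\tfrac12(rm+a)\le\tfrac12\big(rm+(r-2)m\big)=(r-1)m$, which is the asserted inequality.

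For the equality case, $v=(r-1)m$ forces both inequalities above to be tight. Tightness of $a\le(r-2)m$ means every hyperedge has exactly $r-2$ vertices of degree $1$, hence exactly $2$ vertices of degree $\ge 2$; tightness of $\sum_x\deg_{\Hy}(x)\ge 2v-a$ means every vertex of degree $\ge 2$ has degree exactly $2$. Together these are precisely the stated structure. Conversely, if $\Hy$ has this structure, then $a=(r-2)m$, the incidences contributed by degree‑$2$ vertices total $2m$ so there are $b=m$ of them, whence $v=a+b=(r-1)m$; moreover each hyperedge then shares exactly its two degree‑$2$ vertices with the union of the others, so such a tuple automatically lies outside $\FamilyX(n,r,m)$, consistent with the hypothesis.

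I do not expect a real obstacle here: the argument is routine double counting. The only point requiring care is the bookkeeping in the equality analysis — correctly matching each tight inequality to its structural consequence and verifying that the extremal configuration does indeed avoid $\FamilyX(n,r,m)$ — together with getting the edge cases $r=2$ and small $m$ to agree with the formula.
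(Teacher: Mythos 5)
Your proof is correct and follows essentially the same route as the paper's: bound the number of degree-one vertices by $(r-2)m$ using the defining property of $\FamilyX(n,r,m)$, double-count vertex--hyperedge incidences to get $v\le\frac{rm+k}{2}\le(r-1)m$, and read off the equality case from tightness of both inequalities. The only difference is that you spell out the converse direction, which the paper dismisses as immediate.
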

\begin{proof}[Proof of Claim~\ref{claim:1}]
Let $k$ be the number of vertices in $\Hy$ of degree~1. 
Since $(R_1,\ldots,R_m)\not\in \FamilyX(n,r,m)$ we have that 
\begin{equation}\label{eq:fis}
k\le (r-2)m\;.
\end{equation}
Since $\Hy$ is spanning and $v-k$ vertices have degree at least~2, it follows that 
\begin{equation}\label{eq:fk}
k+2(v-k) \le \sum_{v\in V(\Hy)}\deg(v)=rm\; .
\end{equation}
Therefore 
\[v \leByRef{eq:fk} \frac{rm-k}{2}+k = \frac{rm+k}{2} \leByRef{eq:fis} \frac{rm + (r-2)m}{2} = (r-1)m\, , \]
and the first statement follows. To prove the second statement, 
suppose that the number of vertices in  $\Hy=\Hy(R_1,\ldots,R_m)$ satisfies $v= (r-1)m$.
Then~\eqref{eq:fis} and~\eqref{eq:fk} are both equalities and so $\Hy$ consists of $(r-2)m$ vertices of degree~1, and $m$ vertices of degree~2.  Since, by assumption, every $R_i$ contains at least two vertices of degree~2 it 
readily follows that it contains exactly two such vertices. The other implication is immediate. 
\end{proof}
Let $\mathfrak{F}(n,r,m)$ be those $(R_1,\ldots,R_m)\in {\binom{[n]}{r}}^m\setminus \FamilyX(n,r,m)$ for which the corresponding hypergraph $\Hy(R_1,\ldots,R_m)$ has $(r-1)m$ vertices. Since for each $(R_1,\ldots,R_m)\in {\binom{[n]} r}^m\setminus (\FamilyX(n,r,m)\cup \mathfrak{F}(n,r,m))$ we have $|\cup_i R_i|\le (r-1)m-1$, we can record each element of ${\binom{[n]} r}^m\setminus (\FamilyX(n,r,m)\cup \mathfrak{F}(n,r,m))$ by an $((r-1)m-1)$-set of $[n]$, and then by specifying to which of the sets $R_1,\ldots,R_m$ each element of that $((r-1)m-1)$-set is an element of. Thus,
\begin{equation}\label{eq_counting_tuples}
	\left| {\binom{[n]}{r}}^m\setminus (\FamilyX(n,r,m)\cup \mathfrak{F}(n,r,m)) \right|\le {\binom{n}{(r-1)m-1}}\cdot \left(2^m\right)^{(r-1)m-1}=O(n^{(r-1)m-1})\;.
\end{equation}

Since $|\Delta(R_1, \dots, R_m)| \le 1$, from~\eqref{eq:delta_termsecono} and \eqref{eq_counting_tuples} we infer
\begin{equation}\label{eq:joanBAEZistheBEST}
\E\left[\left(X_{n,r}-t_r{\binom n r}\right)^m \right]  = \sum_{(R_1,\ldots,R_m)\in \mathfrak{F}(n,r,m)} \Delta(R_1,\ldots,R_m)+O(n^{(r-1)m-1}) \;.
\end{equation}

Now, fix $(R_1,\ldots,R_m)\in \mathfrak{F}(n,r,m)$ and consider the hypergraph $\Hy=\Hy(R_1,\ldots,R_m)$. Notice that when $r=2$ then some edges in $\Hy$ may be double edges, but all hyperedges are simple when $r\ge 3$. 
Now, replace every $r$-edge, say $R$, of $\Hy$ by a 2-edge that consists of the vertices of $R$ having degree~$2$ and notice that this results in a $2$-regular multigraph, that is, a union of vertex-disjoint cycles and double edges. 
In particular, this implies that the hypergraph $\Hy$ is a union of vertex-disjoint loose cycles.

We now need to deal with the right-hand side of~\eqref{eq:defineDelta} for tuples in $\mathfrak{F}(n,r,m)$, that is for tuples corresponding to unions of loose cycles. We first note that we can factor it over cycles, namely if there is a partition $[m] = V_1 \cup \dots \cup V_k$ such that each $\Hy(R_i : i \in V_j)$ is a loose cycle, then the random variables $\prod_{i \in V_j}(I_{R_i} - t_r), j = 1, \dots, k$ are independent and hence
\begin{equation}
  \label{eq:cycle_factor}
\Delta(R_1,\ldots,R_m) = \prod_j \Delta(R_i : i \in V_j).
\end{equation}
In order to calculate the individual factors in \eqref{eq:cycle_factor}, the following claim will be useful.
\begin{claim}\label{claim:1.5}
For each $i,r\ge 2$, for any proper subhypergraph $\C\subset \C_{i}^{(r)}$, we have 
$\E\left[\prod_{R\in \C} I_R\right] = t_r^{|\C|}$.
\end{claim}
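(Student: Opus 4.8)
The plan is to analyze the structure of a proper subhypergraph $\C \subsetneq \C_i^{(r)}$ and show that, because it is missing at least one of the $i$ hyperedges of the loose cycle, the indicators $\{I_R : R \in \C\}$ become mutually independent once we condition on the types of the appropriate ``connector'' vertices. Recall that $\C_i^{(r)}$ is built from the cycle $C_i$ by blowing up each edge with $r-2$ fresh vertices, so the only vertices shared between two distinct hyperedges of $\C_i^{(r)}$ are the $i$ ``cycle vertices'' $1, \dots, i$, and each such vertex lies in exactly two hyperedges. Since $\C$ omits at least one hyperedge of the cycle, the ``overlap graph'' on the hyperedges of $\C$ (two hyperedges adjacent iff they share a vertex) is a disjoint union of paths — it contains no cycle — and in particular it is a forest.

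First I would set up the conditioning: let $\x$ be the vector of types of all cycle vertices $1, \dots, i$ that actually appear in two hyperedges of $\C$, and observe that conditionally on $\{U_v = x_v\}$ for those $v$, the hyperedges of $\C$ involve pairwise disjoint sets of remaining (unconditioned) random types, hence the family $\{I_R : R \in \C\}$ is conditionally independent. Then I would compute $\E\left[\prod_{R \in \C} I_R \;\middle|\; \{U_v = x_v\}\right] = \prod_{R \in \C} \E[I_R \mid \{U_v = x_v\}]$, where for each hyperedge $R$ the conditional expectation is $t_{\x_R}(K_r^{\bullet}, W)$ or $t_{\x_R}(K_r^{\bullet\bullet}, W)$ depending on whether $R$ contains one or two of the conditioned cycle vertices (a hyperedge at the end of a path in the overlap forest contains only one). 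Now I invoke $K_r$-regularity: $t_x(K_r^{\bullet}, W) = t_r$ for almost every $x$. For a hyperedge $R$ with two conditioned vertices $u, v$, we use $\int t_{x,y}(K_r^{\bullet\bullet}, W)\, dy = t_x(K_r^{\bullet}, W) = t_r$ (the observation just before Section~\ref{ssec:statement}); so integrating out the type of a degree-one-in-$\C$ connector vertex collapses a $t_{x,y}(K_r^{\bullet\bullet}, W)$ factor to $t_r$.

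The cleanest way to finish is by induction on $|\C|$, peeling off a hyperedge $R_0$ that is a leaf of the overlap forest (such a leaf exists since the forest is nonempty and acyclic, and a single-vertex forest is the trivial case $|\C|=1$ where the statement is just $\E[I_{R_0}] = t(K_r, W) = t_r$). The leaf $R_0$ shares exactly one cycle vertex, say $v$, with the rest of $\C$, and no other vertex of $R_0$ appears elsewhere. Conditioning on everything except the private vertices of $R_0$ and integrating out the private types of $R_0$ first (holding $U_v = x$ fixed), the factor contributed by $R_0$ is $t_x(K_r^{\bullet}, W) = t_r$ for a.e.\ $x$, which is a constant and therefore pulls out of the expectation; what remains is $t_r \cdot \E\left[\prod_{R \in \C \setminus \{R_0\}} I_R\right]$, and $\C \setminus \{R_0\}$ is again a proper subhypergraph of a loose cycle (in fact of $\C_i^{(r)}$), so the induction hypothesis gives $t_r \cdot t_r^{|\C|-1} = t_r^{|\C|}$.

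The main obstacle — really the only subtlety — is making sure the bookkeeping of which vertices are shared is airtight: one must verify that in $\C_i^{(r)}$ the blow-up vertices are genuinely private to their hyperedge and that removing one hyperedge from the cycle really does break all cycles in the overlap structure, so that a leaf hyperedge (sharing only a single vertex with the remainder) always exists. Once that combinatorial fact is pinned down, the probabilistic step is a routine application of conditional independence together with the two manifestations of $K_r$-regularity ($t_x(K_r^\bullet, W) \equiv t_r$ and $\int t_{x,y}(K_r^{\bullet\bullet},W)\,dy = t_r$). I would also remark that the hypothesis that $\C$ is \emph{proper} is essential: for $\C = \C_i^{(r)}$ itself the overlap structure is a cycle, no leaf exists, the indicators are not conditionally independent in this simple way, and indeed $\E[\prod_R I_R] = t(G_{i,r}, W) \ne t_r^i$ in general.
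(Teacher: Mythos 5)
Your proof is correct and follows essentially the same route as the paper's: induction on $|\C|$, peeling off a hyperedge that meets the union of the others in at most one vertex, conditioning on the type of that shared vertex, and invoking $K_r$-regularity to replace the conditional expectation of its indicator by $t_r$. The only cosmetic difference is that you justify the existence of such a hyperedge via the overlap-forest/leaf argument (and you should allow the leaf to share zero vertices as well as exactly one, which is the even easier case the paper treats separately), whereas the paper simply asserts its existence.
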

\begin{proof}[Proof of Claim~\ref{claim:1.5}]
We proceed by induction on the number of hyperedges of $\C$. The case when $\C=\emptyset$ is trivial. So suppose that $\C\neq\emptyset$.

\begin{figure}
\begin{center}
		\includegraphics[scale=0.8]{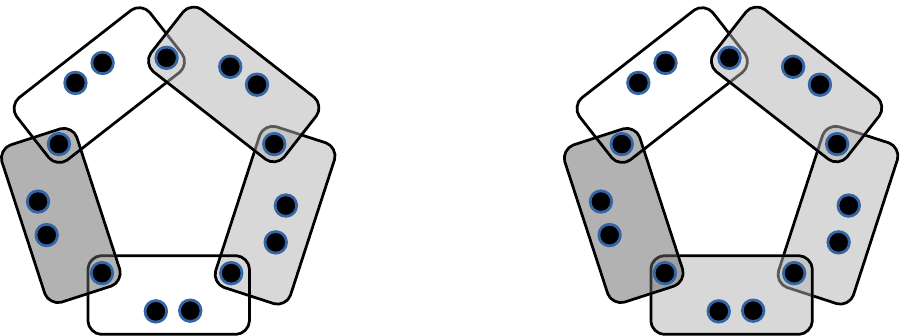}
\end{center}			
	\caption{Two examples of a hypergraph $\C^-$ (light grey) and an edge $S$ (dark grey) in a hypergraph~$\C_{5}^{(4)}$ in the proof of Claim~\ref{claim:1.5}.}
	\label{fig:cyclesS}
\end{figure}

Since $\C$ is a proper subhypergraph of $\C_{i}^{(r)}$, it contains a hyperedge $S\in \C$ such that for $\C^-:=\C\setminus \{S\}$ we have $|S\cap\bigcup \C^-|\le 1$ (here and below $\bigcup \Hy$ stands for the union of the hyperedges of $\Hy$). See Figure~\ref{fig:cyclesS}. Let us deal first with the case $|S\cap\bigcup \C^-|=1$, and let $v$ be the vertex shared by $S$ and $\bigcup \C^-$. By the same argument as in \eqref{eq:Kalahari}, we have
\begin{align*}
	\E\left[\textstyle\prod_{R\in \C} I_R\right]=\int_0^1 \E\left[I_S\:|\:U_v = x\right] \:\cdot\:\E\left[\textstyle\prod_{R\in \C^-} I_R\:|\:U_v = x\right] \;dx,
\end{align*}
By the $K_r$-regularity, we have $\E[I_S\:|\:U_v = \cdot] \equiv t_r$. Thus, using the induction hypothesis on $\C^-$, we conclude that 
\begin{align*}
	\E\left[\textstyle\prod_{R\in \C} I_R\right]=\int_0^1 t_r \E\left[\textstyle\prod_{R\in \C^-} I_R\:|\:U_v = x\right] \;dx = t_r \E\left[\textstyle\prod_{R\in \C^-} I_R\right]=t_r\cdot t_r^{|\C^-|}\;,
\end{align*}
as was needed. The case $|S\cap\bigcup \C^-|=0$ is even simpler:
\begin{align*}
\E\left[\textstyle\prod_{R\in \C} I_R\right]=\E\left[I_S\right] \:\cdot\:\E\left[\textstyle\prod_{R\in \C^-} I_R\right]=t_r\cdot t_r^{|\C^-|}\;.
\end{align*}
\end{proof}

Recall that for each $(R_1,\ldots,R_m)\in\mathfrak{F}(n,r,m)$, the hypergraph $\Hy(R_1,\ldots,R_m)$ is a union of loose cycles. Isomorphism classes of such hypergraphs can be encoded by a vector whose entry at position $i$ is the number of loose cycles of length $i$. More precisely, let us consider the following set of $(m-1)$-dimensional vectors,
\begin{equation}\label{eq:vectors}
 \mathcal{V}_m := \left\{\kk=(k_2,\ldots,k_m) \in \mathbb{N}_{0}^{m-1}: \sum_{i=2}^m ik_i =m \right\}.   
\end{equation}
Suppose that $\kk\in \mathcal{V}_m$. Let $\Hy_{\kk}^{(r)}$ denote the hypergraph formed by $k_i$ copies of $\C_i^{(r)}$ for each $i=2,\ldots,m$. 
\begin{claim}\label{cl:ahoj}
Suppose that $(R_1,\ldots,R_m)\in \mathfrak{F}(n,r,m)$ is an $m$-tuple for which $\Hy(R_1,\ldots,R_m)$ is isomorphic to $\Hy_{\kk}^{(r)}$, for some $\kk\in \mathcal{V}_m$. Then
\begin{equation}\label{eq:prod_formula} 
\Delta(R_1,\ldots,R_m) = \prod_{\ell=2}^m \left( t\left(G_{\ell,r},W\right)- t_{r}^{\ell} \right)^{k_\ell},
\end{equation}
where $G_{\ell, r}$ is the graph associated to $\C_{\ell}^{(r)}$, as defined in Section~\ref{ss_hypergraphs}.
\end{claim}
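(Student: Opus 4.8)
The plan is to reduce the evaluation of $\Delta(R_1,\ldots,R_m)$ to the case of a single loose cycle and then to compute that single factor by inclusion--exclusion, using Claim~\ref{claim:1.5} for the terms corresponding to proper subhypergraphs.

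First I would apply the factorization~\eqref{eq:cycle_factor}. Since $\Hy(R_1,\ldots,R_m)$ is isomorphic to $\Hy_{\kk}^{(r)}$, it is a vertex-disjoint union of loose cycles, so there is a partition $[m]=V_1\cup\cdots\cup V_k$ in which each class indexes one loose cycle; moreover, for each $\ell\in\{2,\ldots,m\}$ exactly $k_\ell$ of these classes have size $\ell$. By~\eqref{eq:cycle_factor}, $\Delta(R_1,\ldots,R_m)=\prod_j\Delta(R_i:i\in V_j)$, so it suffices to prove that whenever $\Hy(R_1,\ldots,R_\ell)$ is a single loose cycle isomorphic to $\C_\ell^{(r)}$ one has $\Delta(R_1,\ldots,R_\ell)=t(G_{\ell,r},W)-t_r^{\ell}$; grouping the factors according to cycle length then gives exactly~\eqref{eq:prod_formula}.

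For the single-cycle computation I would expand the product in the definition~\eqref{eq:defineDelta} as
\[
\prod_{i=1}^{\ell}(I_{R_i}-t_r)=\sum_{S\subseteq[\ell]}(-t_r)^{\ell-|S|}\prod_{i\in S}I_{R_i},
\]
and take expectations term by term. For a proper subset $S\subsetneq[\ell]$, the sub(multi)hypergraph $\{R_i:i\in S\}$ is a proper subhypergraph of $\C_\ell^{(r)}$, so Claim~\ref{claim:1.5} yields $\E\big[\prod_{i\in S}I_{R_i}\big]=t_r^{|S|}$; summing these contributions and using $\sum_{j=0}^{\ell}\binom{\ell}{j}(-1)^{\ell-j}=0$ (valid since $\ell\ge 2$) collapses all the proper-$S$ terms to $-t_r^{\ell}$. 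For the full index set $S=[\ell]$, the event $\bigcap_{i}\{R_i\text{ induces a clique in }\GG(n,W)\}$ is precisely the event that every edge of the associated graph $G_{\ell,r}$ is present on the $(r-1)\ell$ distinct vertices of $\bigcup_i R_i$, so its probability is $t(G_{\ell,r},W)$ by~\eqref{eq:1}. Adding the two contributions gives $\Delta(R_1,\ldots,R_\ell)=t(G_{\ell,r},W)-t_r^{\ell}$, and the claim follows.

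I do not expect a real obstacle; the points deserving care are (i) verifying that removing even a single hyperedge from $\C_\ell^{(r)}$ leaves a \emph{proper} subhypergraph, so that Claim~\ref{claim:1.5} applies uniformly --- including the degenerate case $r=2$, $\ell=2$, where $\C_2^{(2)}$ is a double edge --- and (ii) carefully distinguishing the index set $[\ell]$ from the hyperedge multiset when two hyperedges coincide, which again is relevant only for $r=2$, $\ell=2$; in that case $G_{2,2}=K_2$ and the formula specializes to the correct value $t_r-t_r^{2}$.
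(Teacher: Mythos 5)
Your proposal is correct and follows essentially the same route as the paper's proof: reduce to a single loose cycle via~\eqref{eq:cycle_factor}, expand $\prod_i(I_{R_i}-t_r)$ over subsets, apply Claim~\ref{claim:1.5} to the proper subsets so that their contributions collapse to $-t_r^{\ell}$ by the binomial identity, and identify the full-product expectation with $t(G_{\ell,r},W)$. Your extra remarks on the degenerate case $r=2$, $\ell=2$ are a sensible refinement of a detail the paper handles only implicitly.
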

\begin{proof}[Proof of Claim~\ref{cl:ahoj}]
In view of \eqref{eq:cycle_factor}, it is enough to prove \eqref{eq:prod_formula} for an $\ell$-tuple $(Q_1,\ldots,Q_\ell)\in\mathfrak{F}(n,r,\ell)$ for which $\Hy(Q_1,\ldots,Q_\ell)$ is isomorphic to $\C_{\ell}^{(r)}$. We have
\begin{eqnarray*}
\Delta(Q_1,\ldots,Q_\ell) &=& \sum_{A\subseteq [\ell]} \left(-t_r \right)^{\ell-|A|} \cdot \E\left[ \prod_{j\in A} I_{Q_j} \right]\\
\justify{Claim~\ref{claim:1.5}}&=& \E\left[\prod_{j\in [\ell]} I_{Q_j} \right] + \sum_{A\subsetneq [\ell]}  \left(-t_r \right)^{\ell-|A|}t_{r}^{|A|} \\
&=&\E\left[\prod_{j\in [\ell]} I_{Q_j} \right] + \left( t_r -t_r\right)^\ell - t_r^\ell \\ 
&=& t\left(G_{\ell,r},W\right) - t_r^\ell\;.
\end{eqnarray*}
\end{proof}

\begin{claim}\label{claim:2}
Fix $\kk\in\mathcal{V}_m$. Then the number of $m$-tuples $(R_1,\ldots,R_m)\in{\binom{[n]} r}^m$ for which $\Hy(R_1,\ldots,R_m)$ is isomorphic to $\Hy_{\kk}^{(r)}$ is equal to 
\begin{equation}\label{eq:aut_formula} 
A(n,r,\kk):= \frac{ m! \cdot (n)_{(r-1)m} }{ \prod_{\ell=2}^m (2\ell ((r-2)!)^{\ell})^{k_{\ell}} \:\cdot\: k_{\ell}!  }.    
\end{equation}
\end{claim}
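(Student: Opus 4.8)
The plan is to count the $m$-tuples $(R_1,\dots,R_m)$ in three stages: first choose the ground set of $(r-1)m$ labelled vertices, then count the labelled ways of placing a copy of $\Hy^{(r)}_{\kk}$ on it, and finally divide by the automorphisms of the abstract hypergraph $\Hy^{(r)}_{\kk}$ that are absorbed because the hyperedges carry labels $1,\dots,m$. More precisely, a tuple in the claimed isomorphism class is the same thing as an injection of $V(\Hy^{(r)}_{\kk})$ into $[n]$ together with a bijection between $\{R_1,\dots,R_m\}$ and the hyperedge multiset of $\Hy^{(r)}_{\kk}$ that respects the incidences. The number of injections of the vertex set is $(n)_{(r-1)m}$ since $\Hy^{(r)}_{\kk}$ has $(r-1)m$ vertices (each component $\C^{(r)}_\ell$ has $\ell(r-1)$ vertices, and $\sum_\ell \ell k_\ell = m$). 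Hence
\[
 A(n,r,\kk) = (n)_{(r-1)m} \cdot \frac{(\text{number of labellings of hyperedges})}{1},
\]
and the combinatorial content is to show that the number of ways to attach the labels $1,\dots,m$ to the hyperedges of an (otherwise unlabelled but vertex-labelled) copy of $\Hy^{(r)}_{\kk}$ equals $m! \big/ \prod_{\ell=2}^m (2\ell((r-2)!)^\ell)^{k_\ell} k_\ell!$.

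The cleanest route to that count is via the orbit--counting identity: the number of distinct vertex-and-edge-labelled copies equals $m! \,(n)_{(r-1)m}$ divided by $|\mathrm{Aut}(\Hy^{(r)}_{\kk})|$, where $\mathrm{Aut}$ denotes automorphisms of the hypergraph as a vertex-and-hyperedge structure (permuting vertices, with hyperedges carried along as sets). So the key step is the identity
\[
 |\mathrm{Aut}(\Hy^{(r)}_{\kk})| \;=\; \prod_{\ell=2}^m \big(2\ell\,((r-2)!)^{\ell}\big)^{k_\ell}\, k_\ell!\,.
\]
This factors over components: an automorphism may permute the $k_\ell$ isomorphic copies of $\C^{(r)}_\ell$ among themselves (contributing $\prod_\ell k_\ell!$), and within each copy of $\C^{(r)}_\ell$ it acts as an automorphism of a single loose cycle. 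For one loose $r$-cycle $\C^{(r)}_\ell$ with $\ell\ge 3$, the automorphism group is the dihedral group acting on the underlying $C_\ell$ (order $2\ell$) times, independently for each of the $\ell$ hyperedges, the $(r-2)!$ permutations of the $r-2$ interior (degree-$1$) vertices of that hyperedge; this gives $2\ell\,((r-2)!)^{\ell}$. One has to check the boundary case $\ell=2$ separately: $\C^{(r)}_2$ is a pair of $r$-sets sharing exactly two vertices, and its automorphism group has order $2\cdot(r-2)!\cdot(r-2)!\cdot 2 = 2\cdot 2\cdot((r-2)!)^2 = 4((r-2)!)^2$ — the two shared vertices can be swapped ($2$), the interior vertices of each of the two hyperedges permuted ($((r-2)!)^2$), and the two hyperedges swapped ($2$) — which matches $2\ell((r-2)!)^\ell$ with $\ell=2$. (When $r=2$ and $\ell=2$ the "hypergraph" is a double edge and the formula $2\cdot 2\cdot((0)!)^2 = 4$ must be read as the automorphism count of a $2$-regular multigraph on two vertices, consistent with the $r\ge 3$ derivation.)

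The main obstacle is getting the automorphism bookkeeping exactly right at the two degenerate points — the short cycle $\ell=2$, and the uniformity value $r=2$ where loose cycles degenerate to ordinary cycles and double edges — since these are precisely the cases where a naive "dihedral $\times$ interior permutations" formula can over- or undercount by a factor of $2$. Everything else is routine: once $|\mathrm{Aut}(\Hy^{(r)}_{\kk})|$ is pinned down, the claimed formula for $A(n,r,\kk)$ follows immediately from orbit counting, noting that there are exactly $m!$ bijections from $[m]$ to the (unlabelled) hyperedge multiset and $(n)_{(r-1)m}$ injections of the vertex set, and that these choices are counted with multiplicity $|\mathrm{Aut}(\Hy^{(r)}_{\kk})|$.
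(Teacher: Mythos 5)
Your proof is correct and follows essentially the same route as the paper: compute $\aut(\C_\ell^{(r)}) = 2\ell((r-2)!)^\ell$, multiply by $k_\ell!$ for permuting the isomorphic loose-cycle components, and conclude by orbit counting with the factors $m!$ and $(n)_{(r-1)m}$. The only (cosmetic) difference is at $r=2$, $\ell=2$: the paper records the vertex-automorphism count $2$ of the double edge and compensates by counting only $m!/2^{k_2}$ tuples per copy, whereas you fold both factors of $2$ into a single incidence-structure automorphism group of order $4$ --- equivalent bookkeeping, provided (as you indicate) that $\mathrm{Aut}$ is understood to permute parallel hyperedges independently of the vertex permutation.
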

\begin{proof}[Proof of Claim~\ref{claim:2}] 
Suppose first that $r\ge 3$. 
Notice that the number of automorphisms of $\C_{\ell}^{(r)}$ equals $2\ell\cdot((r-2)!)^{\ell}$, and therefore the number of automorphisms of $\Hy_{\kk}^{(r)}$ satisfies
\[  \aut(\Hy_{\kk}^{(r)})=\prod_{\ell=2}^m ( 2\ell ((r-2)!)^{\ell})^{k_{\ell}} \:\cdot \: k_{\ell}! \;. \] 
As there are $\frac{(n)_{(r-1)m}}{\aut(\Hy_{\kk}^{(r)})}$ copies of $\Hy_{\kk}^{(r)}$ on $n$ vertices and each copy corresponds to $m!$ many $m$-tuples $(R_1,\ldots,R_m)$,  the proof of the case $r\ge 3$ is complete. 

The case $r=2$ is similar; the only difference being that the number of automorphisms of  $\C_{2}^{(2)}$ equals $2$ and that each copy of $\Hy_{\kk}^{(2)}$ corresponds to $\frac{m!}{2^{k_2}}$ many $m$-tuples. The details are left to the reader. 
\end{proof}

We now resume expressing $\E\left[\left(X_{n,r}-t_r{\binom n r}\right)^m \right]$, which we abandoned at~\eqref{eq:joanBAEZistheBEST}. Recall that $G_{\ell,r}$ is the graph associated with $\C_\ell^{(r)}$.
Adding~\eqref{eq:prod_formula} and \eqref{eq:aut_formula}, we get
\begin{eqnarray}
\nonumber
\E\left[\left(X_{n,r}-t_r\textstyle{\binom n r}\right)^m \right] &=& \sum_{\kk\in \mathcal{V}_m}  A(n,r,\kk)  \prod_{\ell=2}^m (t(G_{\ell,r},W)-t_{r}^{\ell})^{k_\ell} + O(n^{(r-1)m-1})\\
\label{eq:AnSe}
&=& n^{(r-1)m}m!\sum_{\kk\in\mathcal{V}_{m}}\prod_{\ell=2}^m\left( \frac{t(G_{\ell,r},W)-t_r^{\ell}}{2\ell((r-2)!)^{\ell}} \right)^{k_{\ell}}\frac{1}{k_{\ell}!}+O(n^{(r-1)m-1})\;.
\end{eqnarray}
For $\ell=2,3,\ldots$, let us write
\begin{equation}\label{eq:dees}
d_\ell := \frac{t(G_{\ell,r},W)-t_r^{\ell}}{2\ell((r-2)!)^{\ell}}\;.
\end{equation}

Treating $g(x) := \sum_{\ell = 2}^{\infty} d_\ell x^\ell$ as a formal power series, and substituting $y = g(x)$ in $e^y = \sum_{j = 0}^\infty y^j/j!$ we obtain another power series (since the free coefficient of $g$ is zero),
\begin{equation}\label{eq:psf}
  f(x) := \exp \left( g(x) \right)
= \exp\left(\sum_{\ell= 2}^\infty  d_\ell x^\ell\right)\;. 
\end{equation}

The following two claims are needed to show that $f(x)$ is the moment generating function of the limit of $(X_{n,r}-{\binom n r}t_r)/n^{r-1}$.

\begin{claim}\label{cl:generatingfunction}
For each $m\in\mathbb{N}$, as $n\rightarrow\infty$, the quantity $\frac{1}{m!}\cdot \E[\left(X_{n,r}-t_r{\binom n r}\right)^m/(n^{(r-1)m})]$ converges to $\llbracket x^m\rrbracket f(x)$, the coefficient of $x^m$ in the (formal) power series $f(x)$.
\end{claim}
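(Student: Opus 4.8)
The plan is to combine equation~\eqref{eq:AnSe} with the combinatorial structure of the set $\mathcal{V}_m$ of integer partition vectors. First I would observe that~\eqref{eq:AnSe}, after dividing by $n^{(r-1)m}m!$, gives
\[
  \frac{1}{m!}\cdot\frac{\E\left[\left(X_{n,r}-t_r\binom n r\right)^m\right]}{n^{(r-1)m}}
  = \sum_{\kk\in\mathcal{V}_m}\prod_{\ell=2}^m \frac{d_\ell^{\,k_\ell}}{k_\ell!} + O(n^{-1}),
\]
where $d_\ell$ is as defined in~\eqref{eq:dees}. So it suffices to show that the $n$-independent main term equals $\llbracket x^m\rrbracket f(x)$, and then let $n\to\infty$ so the error term vanishes.

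The core identity is purely formal: I would expand $f(x) = \exp\big(\sum_{\ell\ge 2} d_\ell x^\ell\big)$ using $\exp(y) = \sum_{j\ge 0} y^j/j!$ together with the multinomial theorem, and track which monomials contribute to $x^m$. Concretely, $\exp\big(\sum_{\ell\ge 2} d_\ell x^\ell\big) = \prod_{\ell\ge 2}\exp(d_\ell x^\ell) = \prod_{\ell\ge 2}\sum_{k_\ell\ge 0} \frac{d_\ell^{\,k_\ell}}{k_\ell!}x^{\ell k_\ell}$. Multiplying these series out, the coefficient of $x^m$ is obtained by choosing, for each $\ell$, an exponent $k_\ell\ge 0$ subject to $\sum_{\ell\ge 2}\ell k_\ell = m$; since $k_\ell = 0$ is forced for $\ell > m$, the admissible choices are exactly the vectors $\kk\in\mathcal{V}_m$, and each contributes $\prod_{\ell=2}^m d_\ell^{\,k_\ell}/k_\ell!$. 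This matches the main term above exactly, proving the claim. I should be slightly careful to note that this manipulation is legitimate at the level of formal power series (or, if one prefers, for $|x|$ small enough that everything converges absolutely, which is ensured because $\sum_\ell d_\ell^2 < \infty$ as will follow from~\eqref{eq:eigenvalues}-type bounds), so that rearranging the product of infinitely many series is valid when extracting a single coefficient $\llbracket x^m\rrbracket$ — only finitely many factors actually matter.

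The main obstacle, such as it is, is bookkeeping rather than mathematics: one must be sure that the error term $O(n^{(r-1)m-1})$ from~\eqref{eq:AnSe} is genuinely negligible after dividing by $n^{(r-1)m}$ (it is, giving $O(1/n)$), and that the constant $m!$ is placed correctly on both sides so that the factor $1/k_\ell!$ on the combinatorial side lines up with the $1/k_\ell!$ coming from $\exp(d_\ell x^\ell)$. Once the dictionary ``partition vectors $\kk\in\mathcal{V}_m$ $\leftrightarrow$ monomials of degree $m$ in $f$'' is set up, the claim is immediate. I would also remark in passing that this is precisely the exponential formula relating the ``connected'' contributions (single loose cycles, encoded by $g$) to all contributions (disjoint unions of loose cycles, encoded by $f = e^g$), which is the structural reason the generating function is an exponential.
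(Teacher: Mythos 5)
Your proposal is correct and follows essentially the same route as the paper: both reduce the claim to the identity $\llbracket x^m\rrbracket f(x) = \sum_{\kk\in\mathcal{V}_m}\prod_{\ell=2}^m d_\ell^{k_\ell}/k_\ell!$ and then read this off from~\eqref{eq:AnSe} after dividing by $n^{(r-1)m}m!$. The only (cosmetic) difference is that you extract the coefficient by factoring $\exp\bigl(\sum_\ell d_\ell x^\ell\bigr)=\prod_\ell \exp(d_\ell x^\ell)$ and multiplying out, whereas the paper expands $\sum_j g(x)^j/j!$ and applies the multinomial theorem; both are valid formal-power-series computations since only finitely many terms contribute to $x^m$.
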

\begin{proof}
We have
\begin{align*}
\llbracket x^m\rrbracket f(x)
&= \llbracket x^m\rrbracket \left( \exp\left(\sum_{\ell = 2}^\infty  d_\ell x^\ell \right)\right)
=\llbracket x^m\rrbracket \left( \sum_{j=1}^\infty \frac{1}{j!}\cdot \left(\sum_{\ell = 2}^\infty  d_\ell x^\ell \right)^j \right) \\
\justify{multinomial theorem}&=\sum_{j=1}^m \frac{1}{j!}\, \,  \sum_{\kk \in \mathcal{V}_m : k_2+\ldots+k_m=j}{\binom{j}{k_2,\cdots,k_m}}\cdot\prod_{\ell =2}^m d_\ell ^{k_\ell } \\
\justify{definition of the multinomial coefficient}&=\sum_{j=1}^m \, \,\sum_{\kk \in \mathcal{V}_m : k_2+\ldots+k_m=j}\, \, \prod_{\ell =2}^m \frac{d_\ell ^{k_\ell }}{k_\ell !} \\
&=\sum_{\kk\in\mathcal{V}_{m}}\, \prod_{\ell =2}^m d_\ell ^{k_{\ell }}\frac{1}{k_{\ell }!}\\
\justify{by~\eqref{eq:AnSe}}&=\lim_{n\rightarrow \infty} \frac{1}{m!}\cdot \frac{\E\left[\left(X_{n,r}-t_r{\binom n r}\right)^m\right]}{n^{(r-1)m}}\;.
\end{align*}
\end{proof}

Since $|d_i|\le ((r-2)!)^{-i}$, in particular, the sequence $|d_i|, i \ge 2$ is bounded. Therefore the series $\sum_i d_ix^i$ has positive radius of convergence, and $f(x)$ can be expanded as its Taylor series around zero.
	In the next claim, we show that the function $f$ equals the \mgf{} $M_Y$ defined in~\eqref{eq:psh}.

\begin{claim}\label{cl:psequal}
In some neighbourhood of zero we have $M_Y(x)=f(x)$.
\end{claim}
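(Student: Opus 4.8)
### Proof Proposal for Claim~\ref{cl:psequal}

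The plan is to show that both $M_Y(x)$ and $f(x)$ are given by the same explicit product/exponential formula, working term by term over the cycle lengths $\ell \ge 2$. We already have from~\eqref{eq:psh} that
\[
M_Y(x) = \exp\left(\frac{\sigr^{2}x^{2}}{2}\right)\prod_{\lambda\in\Spec^-(\vw)}\frac{\exp\left(-\frac{\lambda x}{2(r-2)!}\right)}{\sqrt{1-\frac{\lambda x}{(r-2)!}}},
\]
and from~\eqref{eq:psf} that $f(x) = \exp\left(\sum_{\ell\ge 2} d_\ell x^\ell\right)$ with $d_\ell = (t(G_{\ell,r},W) - t_r^\ell)/(2\ell((r-2)!)^\ell)$. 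So it suffices to show that the exponent $g(x) = \sum_{\ell \ge 2} d_\ell x^\ell$ equals $\log M_Y(x)$ in a neighbourhood of zero. First I would take the logarithm of $M_Y$, using the power series $-\tfrac12\log(1-u) = \sum_{\ell \ge 1} u^\ell/(2\ell)$, to get
\[
\log M_Y(x) = \frac{\sigr^2 x^2}{2} + \sum_{\lambda\in\Spec^-(\vw)}\left(-\frac{\lambda x}{2(r-2)!} + \sum_{\ell\ge 1}\frac{1}{2\ell}\left(\frac{\lambda x}{(r-2)!}\right)^\ell\right) = \frac{\sigr^2 x^2}{2} + \sum_{\ell\ge 2}\frac{x^\ell}{2\ell((r-2)!)^\ell}\sum_{\lambda\in\Spec^-(\vw)}\lambda^\ell,
\]
where the $\ell = 1$ terms cancel. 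Thus $\log M_Y(x) = \tfrac12\sigr^2 x^2 + \sum_{\ell\ge 2} x^\ell \cdot \tfrac{1}{2\ell((r-2)!)^\ell}\sum_{\lambda\in\Spec^-(\vw)}\lambda^\ell$, and I must match this against $g(x)$ coefficient by coefficient.

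The key computational step is to evaluate $\sum_{\lambda\in\Spec^-(\vw)}\lambda^\ell$ via the cycle-density identity~\eqref{eq:cycle_eigenvalues}: since $\Spec(\vw) = \Spec^-(\vw) \cup \{t_r\}$ (as $\vw$ is regular with degree $t_r$), we get $\sum_{\lambda\in\Spec^-(\vw)}\lambda^\ell = t(C_\ell,\vw) - t_r^\ell$. Now I invoke the two identities established in Section~\ref{ss_hypergraphs}: for $\ell \ge 3$, equation~\eqref{eq:hypercycle_density} gives $t(C_\ell,\vw) = t(G_{\ell,r},W)$, so $\tfrac{1}{2\ell((r-2)!)^\ell}\sum_{\lambda\in\Spec^-(\vw)}\lambda^\ell = \tfrac{t(G_{\ell,r},W) - t_r^\ell}{2\ell((r-2)!)^\ell} = d_\ell$, matching~$g$. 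For $\ell = 2$, equation~\eqref{eq:2_cycle_density} gives $t(C_2,\vw) = t(K_r\oplus_2 K_r,W)$, so the coefficient of $x^2$ in $\log M_Y$ from the sum is $\tfrac{t(K_r\oplus_2 K_r,W) - t_r^2}{4((r-2)!)^2}$, and adding $\tfrac12\sigr^2$ and using the definition~\eqref{eq:kissmysoul} of $\sigr^2$ together with $G_{2,r} = K_r\ominus_2 K_r$, I should recover exactly $d_2 = \tfrac{t(K_r\ominus_2 K_r,W) - t_r^2}{4((r-2)!)^2}$; this is the one spot requiring a small arithmetic check that $\tfrac12\sigr^2 + \tfrac{t(K_r\oplus_2 K_r,W) - t_r^2}{4((r-2)!)^2} = \tfrac{t(K_r\ominus_2 K_r,W) - t_r^2}{4((r-2)!)^2}$, which follows since $\tfrac12\sigr^2 = \tfrac{t(K_r\ominus_2 K_r,W) - t(K_r\oplus_2 K_r,W)}{4((r-2)!)^2}$.

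Finally I need to address convergence, not just formal equality: I would note that $f(x)$ has positive radius of convergence (as already observed, $|d_i| \le ((r-2)!)^{-i}$), and that $M_Y(x)$ is finite and analytic for $|x| < \tfrac18(\sum_\lambda \lambda^2)^{-1/2}$ by Lemma~\ref{lem:BhaDiaMuk} (together with $\sigr^2 < \infty$), so both sides are genuine analytic functions near $0$; since their Taylor coefficients agree, they agree as functions on a neighbourhood of zero. The main obstacle is not conceptual but bookkeeping: one must be careful that the $\ell=1$ contributions in $\log M_Y$ truly cancel (they do, precisely because of the $\exp(-\lambda x/(2(r-2)!))$ correction factors in Lemma~\ref{lem:BhaDiaMuk}'s formula), and that the $\ell = 2$ term correctly absorbs the separate normal summand $\sigr Z$ — this is exactly the point where the definition of $\sigr^2$ and the distinction between $G_{2,r} = K_r\ominus_2 K_r$ and the multigraph $K_r\oplus_2 K_r$ (equivalently $C_2$ in $\vw$) must be handled with care.
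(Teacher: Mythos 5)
Your proposal is correct and follows essentially the same route as the paper's proof: both identify $\log M_Y(x)$ with $g(x)=\sum_{\ell\ge2} d_\ell x^\ell$ using \eqref{eq:cycle_eigenvalues}, \eqref{eq:hypercycle_density}, \eqref{eq:2_cycle_density} and the definition \eqref{eq:kissmysoul} of $\sigr^2$ (including the same $\ell=2$ bookkeeping with $G_{2,r}=K_r\ominus_2K_r$ versus $K_r\oplus_2K_r$); you merely run the computation from $M_Y$ towards $f$ rather than the other way around. The one step to make fully explicit is the interchange of the sums over $\ell$ and over $\lambda$ --- the paper verifies the Fubini condition using $\sum_{\lambda}\lambda^2<\infty$ and $\sup_\lambda|\lambda|<\infty$, whereas your appeal to analyticity of both sides gestures at this but does not quite pin it down.
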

\begin{proof}[Proof of Claim~\ref{cl:psequal}]
Recall the definition \eqref{eq:dees} of $d_\ell$. For $\ell = 2$ we have
\begin{eqnarray*}
4((r-2)!)^2 \cdot d_2 &=&  t(K_r\ominus_2 K_r, W) - t_r^2 \\ 
&=&   t(K_r\ominus_2 K_r, W)  - t(C_2,\vw) + t(C_2,\vw) -t_r^2\\ 
\justify{\eqref{eq:2_cycle_density}} &=& t(K_{r}\ominus_{2}K_{r},W)-t(K_{r}\oplus_{2}K_{r},W) + t(C_2,\vw) - t_r^2 \\
\justify{\eqref{eq:cycle_eigenvalues}} &=& t(K_{r}\ominus_{2}K_{r},W) - t(K_{r}\oplus_{2}K_{r},W) + \sum_{\lambda\in\Spec^-(\vw)}\lambda^{2},
\end{eqnarray*}
which, in view of the definition~\eqref{eq:kissmysoul} of $\sigr^2$, implies 
\begin{equation}\label{eq:housle1}
  d_2 = \frac{\sigr^{2}}{2}+\frac{1}{4}\sum_{\lambda\in\Spec^-(\vw)}\left(\frac{\lambda}{ (r-2)! } \right)^{2}.
\end{equation}
For $\ell\ge 3$, from \eqref{eq:hypercycle_density} and \eqref{eq:cycle_eigenvalues} it follows
\begin{align}
\label{eq:housle2}
\begin{split}
d_\ell &= \frac{ t(G_{\ell,r}, W) - t_r^\ell }{2\ell((r-2)!)^\ell} \\ 
&= \frac{  t(C_\ell,\vw) -t_r^\ell}{ 2\ell((r-2)!)^\ell } 
= \frac{1}{2\ell}\sum_{\lambda\in\Spec^-(\vw)} \left(\frac{\lambda}{(r-2)!}\right)^\ell.
\end{split}
\end{align}

We are now ready to relate $f$ and $M_Y$.
In view of \eqref{eq:eigenvalues}, we have
\begin{equation}
\label{eq:eig_l2}
    \sum_{\lambda\in\Spec^-(\vw)} \lambda^2 < \infty
\end{equation}
and in particular
\begin{equation}
  \label{eq:eig_linf}
    \sup_{\lambda\in\Spec^-(\vw)} |\lambda| < \infty.
\end{equation}
Substituting~\eqref{eq:housle1} and~\eqref{eq:housle2} into~\eqref{eq:psf}, we obtain 
\begin{equation}\label{eq:fletna}
\log f(x) = \sum_{\ell=2}^\infty d_{\ell}x^{\ell} =  \frac{\sigr^{2}x^{2}}{2} + \sum_{\ell=2}^\infty\sum_{\lambda\in\Spec^-(\vw)}\frac{1}{2\ell}\left(\frac{\lambda x}{(r-2)!}\right)^{\ell}.
\end{equation}
In order to interchange the order of summation in~\eqref{eq:fletna}, we check a condition that allows applying Fubini's theorem, namely that \eqref{eq:fletna} remains finite, if we replace all summands by their absolute values. By \eqref{eq:eig_linf}, for $|x|$ small enough, the sequence $a_\lambda := \frac{\lambda x}{(r-2)!}$ satisfies $c:= \sup_\lambda |a_\lambda| < 1$. Therefore, for each $\ell\ge 2$,
\[
\sum_\lambda |a_\lambda|^\ell \le \sum_\lambda c^{\ell-2}a_\lambda^2 \ByRef{eq:eig_l2}{=} O(c^\ell),
\]
which implies $\sum_\ell \sum_\lambda |a_\lambda|^\ell/(2\ell) =\sum_\ell O(c^\ell) < \infty$, verifying the desired condition. So, changing the order of summation in~\eqref{eq:fletna}, we obtain 
\begin{eqnarray*}
\log f(x) = \sum_{\ell=2}^\infty d_{\ell}x^{\ell} &=& \frac{\sigr^{2}x^{2}}{2}+\sum_{\lambda\in\Spec^-(\vw)}\sum_{\ell=2}^\infty\frac{1}{2\ell}\left(\frac{\lambda x}{(r-2)!}\right)^{\ell}\\ 
\justify{Taylor's series}&=&\frac{\sigr^{2}x^{2}}{2}-\frac{1}{2}\sum_{\lambda\in\Spec^-(\vw)}\left(\ln\left(1-\frac{\lambda x}{(r-2)!}\right)+\frac{\lambda x}{(r-2)!}\right)\;.
\end{eqnarray*}
By exponentiating the above expression we easily obtain~\eqref{eq:psh}, thus completing the proof.
\end{proof}
We are now finished with the proof of Theorem~\ref{thm:main}\ref{en:chisquare}. Indeed, Claims~\ref{cl:generatingfunction} and~\ref{cl:psequal} imply that the $m$th moment of $(X_{n,r}-\binom n r t_r)/n^{r-1}$ converges to $m! \llbracket x^m \rrbracket f(x) =m! \llbracket x^m \rrbracket M_Y(x) = \E [Y^m]$ for every $m$. 
As argued at the beginning of Section~\ref{sec:proofofC}, this implies that $\frac{X_{n,r}-{\binom n r}t_r}{n^{r-1}}\;\dto\; Y$.

\section{Concluding remarks}\label{sec:concluding}
In this paper, we initiated the study of limit theorems for complete subgraph counts in $\GG(n,W)$. However, the results in this paper should be considered just first steps, and the area offers several obvious open problems.
\begin{itemize}
	\item Extend Theorem~\ref{thm:main} to sparser regimes. Recall that the central limit theorem for the count of $K_r$ in $\GG(n,p)$ holds for $p=p(n)$ as small as $p(n)\gg n^{-2/r-1}$, that is, as long as the expected number of $K_r$'s tends to infinity.
	
\item To model a sparse inhomogeneous random graph, fix a scaling factor $p =  p(n) \to 0$. Then $\GG(n, p\cdot W)$ is a sparse inhomogeneous random graph model. Note that then the assumption that $W$ is bounded from above by~1 can be relaxed somewhat. For example, the giant component of $\GG(n, W/n)$ is studied in~\cite{BoJaRi}. So, we suggest to obtain limit theorems for the count of $K_r$ (or other graphs) in $\GG(n, p\cdot W)$.
	
\item To strengthen the limit theorem obtained here to a local limit theorem. Even in the case of $\GG(n,p)$ this is a very difficult problem which was resolved only recently for cliques~\cite{Berk} and even more recently for general connected graphs~\cite{SahSaw}. Note that such a local limit theorem would have additional restrictions. For example, if $W$ is a graphon consisting of two constant-$1$ components of measure $\frac12$ each, then $X_{n,2}$ is of the form $\binom{k}{2} + \binom {n-k}{2}$, $k\in \mathbb N$, that is, not all integer values can be achieved (including those in the bulk of the distribution).
\end{itemize}

\end{document}